\theoremstyle{plain}
\newtheorem{thm}{Theorem}[section]
\newtheorem{prop}[thm]{Proposition}
\newtheorem{cor}[thm]{Corollary}
\theoremstyle{remark}
\numberwithin{equation}{section}
\newcommand{\Null}{\textnormal{Null}}
\newcommand{\del}{\partial}
\newcommand{\de}{\partial}
\newcommand{\dbar}{\overline{\del}}
\newcommand{\ddb}{\sqrt{-1}\del\dbar}
\newcommand{\ddbar}{\sqrt{-1}\del\dbar}
\newcommand{\ve}{\varepsilon}
\newcommand{\vp}{\varphi}
\newcommand{\ti}[1]{\tilde{#1}}
\newcommand{\Ric}{\mathrm{Ric}}
\renewcommand{\leq}{\leqslant}
\renewcommand{\geq}{\geqslant}
\title{Nakamaye's theorem on complex manifolds}
\author{Valentino Tosatti}
 \address{Department of Mathematics, Northwestern University, 2033 Sheridan Road, Evanston, IL 60208}
\email{tosatti@math.northwestern.edu}
\thanks{Supported in part by a Sloan Research Fellowship and NSF grant DMS-1308988.}
\begin{document}
\begin{abstract}
We discuss Nakamaye's Theorem and its recent extension to compact complex manifolds, together with some applications.
\end{abstract}
\maketitle
\section{Introduction}

Nakamaye's Theorem \cite{Na} characterizes the augmented base locus of a nef and big line bundle on a smooth projective variety over $\mathbb{C}$ as the union of all subvarieties where the restriction of the line bundle is not big (the so-called null locus of the bundle). While this result was originally motivated by applications to diophantine approximation problems \cite{Na3}, it has become an extremely useful tool in a variety of settings. Many generalizations of this result are known, including the cases of singular varieties, of $\mathbb{R}$-divisors, and of other ground fields \cite{Bi,CL, CMM, ELMNP,Na2}. This article discusses a transcendental generalization of Nakamaye's Theorem, which deals with $(1,1)$ classes on compact complex manifolds, and was obtained recently by Collins and the author \cite{CT}. The techniques we used are analytic, and thus also give a new proof of Nakamaye's original statement, and its generalization to $\mathbb{R}$-divisors.

We start in section \ref{sectbas} by giving a brief introduction to transcendental techniques in algebraic/analytic geometry, focusing on the basic results and definitions that we will need later, and explaining how these relate to their algebraic counterparts. In particular, we explain a result of Boucksom \cite{BoT} which gives a transcendental characterization of the augmented base locus of an $\mathbb{R}$-divisor. Nakamaye's Theorem and its generalization to complex manifolds are stated in section \ref{sectmain}. Section \ref{sectappl} contains several applications of this result, including new proofs (and generalizations) of theorems by Demailly-P\u{a}un \cite{DP}, Fujita-Zariski \cite{Fu,Za}, Takayama \cite{Ta}, a discussion of Seshadri constants for $(1,1)$ classes, and applications to the K\"ahler-Ricci flow and to Ricci-flat Calabi-Yau metrics. In section \ref{sectproof} we give a detailed outline of the proof of the main theorem, and we end with a brief discussion of the problem of effectivity in Nakamaye's Theorem in section \ref{secteff}.

While the techniques we use are analytic, and most of the results are stated for general compact complex manifolds, we have strived to make this article accessible to algebraic geometers, by providing the algebraic counterparts (or special cases) of the analytic results which are proved. It is our hope that this will make these ideas known to a wider audience.\\

{\bf Acknowledgments. }The author is grateful to the organizers of the 2015 AMS Summer Institute in Algebraic Geometry for the kind invitation to speak and to contribute this article to the proceedings. The author would also like to thank S. Boucksom, T. Collins, J.-P. Demailly, Y. Deng, L. Ein, R. Lazarsfeld, J. Lesieutre, N. McCleerey, M. Musta\c{t}\u{a}, M. P\u{a}un, M. Popa for very helpful discussions, as well as the referees for useful suggestions. Part of this paper was written during the author's visit at Harvard University's Center for Mathematical Sciences and Applications, which he would like to thank for the gracious hospitality.

\section{Basic results}\label{sectbas}
Throughout this paper, $X$ will denote a compact complex manifold (unless otherwise stated), of complex dimension $n$. We will denote by $\omega$ the real $(1,1)$ form associated to a Hermitian metric on $X$.

\subsection{(1,1) classes}
Let $\alpha$ be a closed real $(1,1)$ form on $X$, and denote by $[\alpha]$ its class in the (finite-dimensional) Bott-Chern cohomology group
$$H^{1,1}(X,\mathbb{R})=\frac{\{d\textrm{-closed real }(1,1) \textrm{ forms}\}}{\{\ddbar f\ |\ f\in C^\infty(X,\mathbb{R})\}}.$$
We will call $[\alpha]$ simply a $(1,1)$ class. If $L\to X$ is a holomorphic line bundle, and $h$ is a smooth Hermitian metric on $L$, then its curvature form is given locally by
$$R_h=-\frac{\sqrt{-1}}{2\pi}\partial\overline{\partial}\log h.$$
This defines a global closed real $(1,1)$ form on $X$, and if $h'$ is another metric on $L$ then the ratio $\frac{h}{h'}$ is a globally defined smooth positive function, and we have
$$R_h-R_{h'}=-\frac{\sqrt{-1}}{2\pi}\partial\overline{\partial}\log\frac{h}{h'}.$$
This shows that there is a well-defined class $c_1(L):=[R_h]\in H^{1,1}(X,\mathbb{R})$. We will say that two holomorphic line bundles $L,L'$ on $X$ are numerically equivalent if $c_1(L)=c_1(L')$.

The real vector subspace of $H^{1,1}(X,\mathbb{R})$ spanned by all classes of the form $c_1(L)$ as $L$ varies among all holomorphic line bundles on $X$ defines the real N\'eron-Severi group $N^1(X,\mathbb{R})\subset H^{1,1}(X,\mathbb{R})$, which is in general a strictly smaller subspace, and $(1,1)$ classes which are outside of it are usually referred to as transcendental.

\subsection{Positivity notions} We now introduce several basic notions of positivity for $(1,1)$ classes, which generalize the corresponding notions for line bundles over projective manifolds.

Let $[\alpha]$ be a $(1,1)$ class on a compact complex manifold $X$, where $\alpha$ is a closed real $(1,1)$ form. Recall that $\omega$ denotes a fixed Hermitian form on $X$. We define the following positivity notions (which are easily seen to be independent of the choice of $\omega$):

\begin{itemize}
\item $[\alpha]$ is {\bf K\"ahler} if it contains a representative which is a K\"ahler form, i.e. if there is a smooth function $\vp$ such that $\alpha+\ddbar\vp\geq\ve\omega$ on $X$, for some $\ve>0$.
\item $[\alpha]$ is {\bf nef} if for every $\ve>0$ there is a smooth function $\vp_\ve$ such that
$\alpha+\ddbar\vp_\ve\geq-\ve\omega$ holds on $X$.
\item $[\alpha]$ is {\bf big} if it contains a K\"ahler current, i.e. if there exists a quasi-plurisubharmonic (quasi-psh) function $\vp:X\to\mathbb{R}\cup\{-\infty\}$ such that $\alpha+\ddbar\vp\geq\ve\omega$ holds weakly as currents on $X$, for some $\ve>0$.
\item $[\alpha]$ is {\bf pseudoeffective} if it contains a closed positive current, i.e. if there exists a quasi-psh function $\vp:X\to\mathbb{R}\cup\{-\infty\}$ such that $\alpha+\ddbar\vp\geq0$ holds weakly as currents.
\end{itemize}

Here, a quasi-psh function means that locally it is given by the sum of a plurisubharmonic function plus a smooth function.

Clearly every K\"ahler class is nef and big, and every big class is pseudoeffective. Also, using weak compactness of currents in a fixed class, it is easy to see that every nef class is pseudoeffective (and there are in general no other implications among these notions).

As shown by Demailly \cite[Proposition 4.2]{Dem92}, if $X$ is projective and if $[\alpha]=c_1(L)$ for a holomorphic line bundle $L$, then these notions are equivalent to their algebraic counterparts. More precisely:
\begin{itemize}
\item $c_1(L)$ is K\"ahler iff $L$ is ample (this is just the Kodaira embedding theorem)
\item $c_1(L)$ is nef iff $L$ is nef (i.e. $(L\cdot C)\geq 0$ for all curves $C\subset X$)
\item $c_1(L)$ is big iff $L$ is big (i.e. $h^0(X,L^m)\geq c m^n$ for some $c>0$ and all large $m$)
\item $c_1(L)$ is pseudoeffective iff $L$ is pseudoeffective (i.e. $c_1(L)$ lies in the closed cone in $N^1(X,\mathbb{R})$ generated by classes of effective $\mathbb{R}$-divisors)
\end{itemize}
In particular, all of these notions are numerical.
Furthermore, these equivalences extend immediately to the case when we replace $L$ by an $\mathbb{R}$-divisor $D$.

The following result of Demailly-P\u{a}un \cite[Theorem 2.12]{DP} will be crucial:
\begin{thm}[Demailly-P\u{a}un \cite{DP}]\label{dpmass}
Let $X$ be a compact complex manifold in Fujiki's class $\mathcal{C}$ and $[\alpha]$ a $(1,1)$ class which is nef and satisfies
$$\int_X\alpha^n>0.$$
Then $[\alpha]$ is big.
\end{thm}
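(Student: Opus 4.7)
The plan is to adapt Demailly-P\u{a}un's mass-concentration argument. Since $X$ lies in Fujiki's class $\mathcal{C}$, choose a bimeromorphic modification $\pi\colon\ti{X}\to X$ with $\ti{X}$ K\"ahler. Then $[\pi^*\alpha]$ is nef on $\ti{X}$ and $\int_{\ti{X}}(\pi^*\alpha)^n=\int_X\alpha^n>0$ by the projection formula. Since bigness of a $(1,1)$ class is a bimeromorphic invariant---a K\"ahler current on $\ti{X}$ pushes forward to a K\"ahler current on $X$, and vice versa---it suffices to prove the theorem when $X$ itself is K\"ahler, which I henceforth assume; fix a K\"ahler form $\omega$.

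For each $\ve>0$, $[\alpha+\ve\omega]$ is K\"ahler by nefness. Pick a smooth point $x_0\in X$, a coordinate ball $B\ni x_0$, and a nonnegative smooth $f$ supported in $B$ with $\int_X f\,\omega^n=1$. By Yau's theorem there is a unique smooth $\vp_\ve$, normalized by $\sup_X\vp_\ve=0$, solving
$$(\alpha+\ve\omega+\ddbar\vp_\ve)^n=c_\ve\,f\,\omega^n,\qquad c_\ve:=\int_X(\alpha+\ve\omega)^n,$$
where $c_\ve\to c_0:=\int_X\alpha^n>0$. Weak compactness of quasi-psh functions yields a subsequential $L^1$-limit $\vp$, and $T:=\alpha+\ddbar\vp$ is a closed positive current in $[\alpha]$. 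The crux of the argument---and the main technical obstacle---is to show that $T$ has strictly positive Lelong number at $x_0$: the entire Monge-Amp\`ere mass $c_\ve$ is trapped in the shrinking support of $f$ and cannot be smeared out in the limit. Rigorously one applies the Bedford-Taylor comparison principle to $\vp_\ve$ against the model weight $\log|z|$ on $B$, together with a Ko\l odziej-type $L^\infty$-estimate on $\vp_\ve$, to deduce $\nu(T,x_0)\ge\gamma\,c_0^{1/n}$ for some $\gamma>0$ independent of $\ve$.

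To upgrade $T$ to a K\"ahler current in $[\alpha]$, apply Demailly's regularization: for every $\delta>0$ there exists $T_\delta=\alpha+\ddbar\psi_\delta\ge-\delta\omega$ with analytic singularities along an analytic subset $Z_\delta\ni x_0$ and $\nu(T_\delta,x_0)\ge\nu(T,x_0)-\delta>0$. On a log-resolution $\mu\colon X'\to X$ of the singular ideal of $\psi_\delta$, the current $\mu^*T_\delta$ decomposes as an effective $\mathbb{R}$-divisor $\sum\lambda_i[E_i]$ (with some $\lambda_i>0$ thanks to the positive Lelong number) plus a smooth form bounded below by $-\delta\mu^*\omega$. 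Nefness gives, for every $\eta>0$, a smooth $u_\eta$ with $\alpha+\ddbar u_\eta\ge-\eta\omega$; forming a small convex combination of $\mu^*(\alpha+\ddbar u_\eta)$ with $\mu^*T_\delta$ and balancing the parameters $\eta,\delta$ against the combination weight produces a K\"ahler current on $X'$ in $[\mu^*\alpha]$, whose pushforward is a K\"ahler current in $[\alpha]$. Hence $[\alpha]$ is big.
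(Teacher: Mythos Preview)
The paper does not give its own proof of Theorem~\ref{dpmass}; it is quoted from Demailly--P\u{a}un, with pointers to Chiose's simpler argument \cite{Ch} and the exposition in \cite{To}. So there is nothing in the paper to compare against directly, and I assess your sketch on its own.

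Your reduction to the K\"ahler case and the use of Yau's theorem to concentrate Monge--Amp\`ere mass are in the right spirit, but the last paragraph contains a genuine gap. On the resolution $X'$ you have $\mu^*T_\delta=\sum_i\lambda_i[E_i]+\theta$ with $\theta$ smooth and $\theta\ge-\delta\,\mu^*\omega$, and you then take a convex combination with the nef representative $\mu^*(\alpha+\ddbar u_\eta)\ge-\eta\,\mu^*\omega$. But away from $\bigcup_i E_i$ any such combination is only bounded below by $-\bigl((1-t)\eta+t\delta\bigr)\mu^*\omega$; the divisorial piece $t\sum_i\lambda_i[E_i]$ contributes no strict positivity off its support. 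Hence the result is never a K\"ahler current on $X'$, regardless of how the parameters are ``balanced''. More conceptually, ``nef together with a current having a positive Lelong number at one point'' does not force bigness: the class of a fibre on a ruled surface is nef and carries currents with positive Lelong numbers, yet is not big. Your step~4 never re-uses the hypothesis $\int_X\alpha^n>0$ beyond having produced the Lelong number, and that is not enough. There is also a smaller inconsistency in step~3: $f$ is introduced as a fixed bump function, yet you then refer to its ``shrinking support''; with $f$ fixed and smooth there is no mechanism forcing a positive Lelong number in the limit.

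For orientation, neither of the actual proofs proceeds via a single-point Lelong number. The original Demailly--P\u{a}un argument concentrates mass along an \emph{arbitrary} submanifold and runs an induction on dimension combined with a gluing. Chiose's simplification argues by contradiction: if $[\alpha-\ve\omega]$ failed to be pseudoeffective for every $\ve>0$, a Lamari-type duality produces an auxiliary positive form against which one tests the Monge--Amp\`ere solutions, yielding an integral inequality that contradicts $\int_X\alpha^n>0$.
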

Recall here that $X$ being in Fujiki's class $\mathcal{C}$ \cite{Fu2} means that there exists a modification $\mu:\ti{X}\to X$, obtained as a composition of blowups with smooth centers, such that $\ti{X}$ is a compact K\"ahler manifold.

When $X$ is projective and $[\alpha]=c_1(L)$, Theorem \ref{dpmass} is just a simple consequence of Riemann-Roch, see \cite[Theorem 2.2.16]{Laz}. In general, this result uses the ``mass concentration'' technique for Monge-Amp\`ere equations, which was pioneered by Demailly \cite{Dem2}. A simpler proof of Theorem \ref{dpmass} was recently obtained by Chiose \cite{Ch} (see also \cite{To} for an exposition of this and related topics).

We close this subsection with a remark about closed positive currents. If $T=\alpha+\ddbar\vp\geq 0$ is such a current on $X$, where $\vp$ is quasi-psh, and $\mu:\ti{X}\to X$ is a holomorphic map such whose image is not contained in the locus $\{\vp=-\infty\}$, then we can define a pullback current $\mu^*T=\mu^*\alpha+\ddbar(\mu^*\vp)$, where $\mu^*\vp=\vp\circ\mu$, and this will still be closed positive on $\ti{X}$. In particular, if $\iota:V\to X$ is the inclusion of a submanifold, which is not contained in $\{\vp=-\infty\}$, then we will write
$T|_V:=\iota^*T$.

On the other hand, if $f:X\to Y$ is any holomorphic map between compact complex manifolds, the pushforward current $f_*T$ (defined as usual by duality, using the pullback map $f^*$ on differential forms) is also a closed positive current on $Y$. If $f$ is a modification, then we have that $f_*f^*$ acts as the identity on closed positive currents. From this it follows easily that in this case if $T$ is a K\"ahler current on $X$ then $f_*T$ is a K\"ahler current on $Y$.

\subsection{Base loci}
Let $L$ be a holomorphic line bundle over a compact complex manifold $X$. The base locus of $L$ is defined as
$${\rm Bs}(L)=\bigcap_{s\in H^0(X,L)}\{s=0\}.$$
This is a closed analytic subvariety of $X$. The stable base locus of $L$ is the closed analytic subvariety defined by
$$\mathbb{B}(L)=\bigcap_{m\geq 1} {\rm Bs}(L^m).$$
By the local Noetherian property of analytic subsets, there exists $m\geq 1$ such that $\mathbb{B}(L)={\rm Bs}(L^m)$. In general these loci also carry the structure of complex analytic subspaces of $X$ (or subschemes if $X$ is projective), but we will not make use of it, and when considering analytic subvarieties we always disregard this extra structure.

We can also define the stable base locus $\mathbb{B}(D)$ for $D$ a $\mathbb{Q}$-divisor, as the intersection of the base loci of $mD$ over all $m\geq 1$ such that $mD$ is an integral divisor, and hence defines a line bundle.

In general the stable base locus is not a numerical invariant, in the sense that there exist line bundles with the same first Chern class but with different stable base loci, see e.g. \cite[Example 10.3.3]{Laz}. To get around this issue, Nakamaye \cite{Na} introduced the augmented base locus of a line bundle $L$ over a projective manifold, defined by
$$\mathbb{B}_+(L)=\bigcap_{\ve\in\mathbb{Q}_{>0}}\mathbb{B}(L-\ve A),$$
where $A$ is any fixed ample line bundle over $X$, and $L-\ve A$ is regarded as a $\mathbb{Q}$-divisor. This is clearly a closed analytic subvariety of $X$, and it is easy to see that it is independent of the choice of $A$, and that it is a numerical invariant. A systematic study of augmented base loci was initiated in \cite{ELMNP2}, which also contains the proofs of these assertions.

It is also easy to extend this definition to $\mathbb{R}$-divisors, by setting
$$\mathbb{B}_+(D)=\bigcap_{A}\mathbb{B}(D-A),$$
where $D$ is an $\mathbb{R}$-divisor and the intersection is over all ample $\mathbb{R}$-divisors $A$ such that $D-A$ is a $\mathbb{Q}$-divisor. This definition agrees with the previous one when $D$ is an integral divisor (see \cite{ELMNP2}).

Apart from being a numerical invariant of the $\mathbb{R}$-divisor $D$, the augmented base locus $\mathbb{B}_+(D)$ has several useful properties. For example, $\mathbb{B}_+(D)\neq X$ iff $D$ is big, and $\mathbb{B}_+(D)=\emptyset$ iff $D$ is ample (see again \cite{ELMNP2}). If $L$ is a line bundle, then the complement of $\mathbb{B}_+(L)$ is the largest Zariski open subset such that for all large and divisible $m$ the Kodaira map
$$X\backslash \mathbb{B}(L)\to \mathbb{P}H^0(X,L^m),$$
defined by sections in $H^0(X,L^m)$ is an isomorphism onto its image (see \cite[Theorem A]{BCL}).

\subsection{The non-K\"ahler locus} In the previous subsection we defined the augmented base locus of an $\mathbb{R}$-divisor on a projective manifold. Following Boucksom \cite{Bo2}, we now generalize this to an arbitrary $(1,1)$ class $[\alpha]$ on a compact complex manifold $X$, by defining the non-K\"ahler locus $E_{nK}(\alpha)$ of $[\alpha]$.

If $[\alpha]$ is not big then we simply set $E_{nK}(\alpha)=X$, while if $[\alpha]$ is big (i.e. it contains K\"ahler currents) we set
$$E_{nK}(\alpha)=\bigcap_{T\in[\alpha]}\mathrm{Sing}(T),$$
where the intersection ranges over all K\"ahler currents $T=\alpha+\ddbar\vp$ in the class $[\alpha]$, and we have defined $\mathrm{Sing}(T)$ to be the complement of the set of points $x\in X$ such that $\vp$ is smooth near $x$. Boucksom observed in \cite[Theorem 3.17]{Bo2} that in fact there exists a K\"ahler current $T$ in $[\alpha]$ with
\begin{equation}\label{equal}
E_{nK}(\alpha)=\mathrm{Sing}(T).
\end{equation}
Furthermore, Demailly's fundamental regularization theorem for currents \cite{Dem} implies that we may assume that $T=\alpha+\ddbar \vp$ has analytic singularities, which means that there exist a coherent ideal sheaf $\mathcal{I}\subset \mathcal{O}_X$ and $c\in\mathbb{R}_{>0}$, such that for every $x\in X$ there exist an open neighborhood $U$ of $x$, finitely many generators $\{f_j\}$ of $\mathcal{I}$ over $U$ and a continuous function $h$ on $U$ such that
$$\vp=c\log\left(\sum_j|f_j|^2\right)+h,$$
holds on $U$. In particular, for such a current $T$ we have that $\mathrm{Sing}(T)$ is a closed analytic subvariety of $X$ (which is the underlying set of the analytic subspace of $X$ defined by $\mathcal{I}$). Therefore, $E_{nK}(\alpha)$ is always a closed analytic subvariety. We record this result as a theorem:

\begin{thm}[Boucksom \cite{Bo2}]\label{bouck}
Let $X$ be a compact complex manifold and $[\alpha]$ a big $(1,1)$ class. Then there exists a K\"ahler current on $X$ in the class $[\alpha]$ with analytic singularities precisely along the analytic set $E_{nK}(\alpha)$.
\end{thm}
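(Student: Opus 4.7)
The plan is to combine two ingredients already referenced in the excerpt—Demailly's regularization theorem for currents and Demailly's regularized maximum of quasi-psh functions—and then to extract a minimum via the descending chain condition for closed analytic subvarieties of $X$.

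The first step reduces the problem to the analytic-singularity case. Given any K\"ahler current $T=\alpha+\ddbar\vp\geq\ve\omega$, Demailly's regularization produces, for every $\delta\in(0,\ve)$, a K\"ahler current $\ti T=\alpha+\ddbar\ti\vp\geq(\ve-\delta)\omega$ with analytic singularities cut out by the zero scheme of a multiplier ideal of $k\vp$ for $k$ large. Since $\vp$ is smooth off $\mathrm{Sing}(T)$, its Lelong numbers vanish there, every such multiplier ideal is trivial outside $\mathrm{Sing}(T)$, and hence $\mathrm{Sing}(\ti T)\subseteq\mathrm{Sing}(T)$. Writing $\mathcal{A}$ for the family of K\"ahler currents in $[\alpha]$ with analytic singularities, this rewrites the definition of the non-K\"ahler locus as
\[E_{nK}(\alpha)=\bigcap_{T\in\mathcal{A}}\mathrm{Sing}(T).\]

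The second step shows that $\mathcal{A}$ is essentially closed under intersection of singular loci. For $T_j=\alpha+\ddbar\vp_j\geq\ve_j\omega$ in $\mathcal{A}$ ($j=1,2$), I will set $\psi:=M_\eta(\vp_1,\vp_2)$, Demailly's regularized maximum, with $\eta>0$ chosen small. Then $\alpha+\ddbar\psi\geq\min(\ve_1,\ve_2)\omega$ since the regularized maximum preserves $\alpha$-plurisubharmonicity, and at any point $x\notin\mathrm{Sing}(T_1)\cap\mathrm{Sing}(T_2)$ one of the $\vp_j$ is smooth near $x$ while the other, if it has a pole there, is strictly more negative in a neighborhood, so $\psi$ coincides locally with the smooth potential. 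Consequently $S:=\alpha+\ddbar\psi$ is a K\"ahler current with $\mathrm{Sing}(S)\subseteq\mathrm{Sing}(T_1)\cap\mathrm{Sing}(T_2)$, and applying the reduction above to $S$ produces $T_{12}\in\mathcal{A}$ with $\mathrm{Sing}(T_{12})\subseteq\mathrm{Sing}(T_1)\cap\mathrm{Sing}(T_2)$.

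The final step is a Noetherian argument: closed analytic subvarieties of the compact manifold $X$ satisfy the descending chain condition, so the family $\{\mathrm{Sing}(T):T\in\mathcal{A}\}$ contains a minimal element $\mathrm{Sing}(T^\star)$, and the pairwise refinement from the previous step forces $\mathrm{Sing}(T^\star)\subseteq\mathrm{Sing}(T)$ for every $T\in\mathcal{A}$. This yields $\mathrm{Sing}(T^\star)\subseteq E_{nK}(\alpha)$, and the reverse inclusion is immediate. I expect the main obstacle to be controlling the singular set of Demailly's regularized current in the first step: one must track carefully, via multiplier ideals and the vanishing of Lelong numbers at smooth points, how the singularities of $\ti T$ are confined inside $\mathrm{Sing}(T)$. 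The max construction and the Noetherian argument are comparatively formal once that containment is in hand.
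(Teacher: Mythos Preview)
Your proposal is correct and is precisely the argument the paper is pointing to: the text preceding the theorem explains that one first obtains a K\"ahler current $T$ with $\mathrm{Sing}(T)=E_{nK}(\alpha)$ via Boucksom's observation \eqref{equal} (which is exactly your regularized-maximum plus Noetherian-descent argument), and then invokes Demailly's regularization to arrange analytic singularities. You have simply interleaved these two steps---regularizing first so that every singular locus in play is already analytic---but the ingredients and logic are identical.
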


This implies that if $[\alpha]$ is a big $(1,1)$ class, given any point $x\not\in E_{nK}(\alpha)$ we can find a global K\"ahler current $T$ on $X$ in the class $[\alpha]$ which is in fact a smooth K\"ahler form in a neighborhood of $x$. In particular, we see that $E_{nK}(\alpha)=\emptyset$ iff $[\alpha]$ is a K\"ahler class.

The connection with the algebraic setting is then provided by the following result, essentially due to Boucksom \cite[Corollary 2.2.8]{BoT}. We reproduce here the proof given in \cite[Proposition 2.4]{CT}.
\begin{thm}[Boucksom \cite{BoT}]\label{bplus} Let $X$ be a projective manifold, and $D$ an $\mathbb{R}$-divisor on $X$. Then
$$\mathbb{B}_+(D)=E_{nK}(c_1(D)).$$
\end{thm}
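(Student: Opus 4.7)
My plan is to split on whether $D$ is big. If $D$ is not big, both $\mathbb{B}_+(D)$ and $E_{nK}(c_1(D))$ equal $X$ (the former by a standard property of augmented base loci, the latter by definition), using the algebraic/analytic equivalence of bigness recalled in the excerpt. Assume therefore that $D$ is big, and prove the two inclusions separately.

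For $E_{nK}(c_1(D)) \subseteq \mathbb{B}_+(D)$, I would fix $x \notin \mathbb{B}_+(D)$ and unwind the definition to obtain an ample $\mathbb{R}$-divisor $A$ with $D - A$ a $\mathbb{Q}$-divisor, an integer $m$ with $m(D-A)$ integral, and a section $s \in H^0(X, m(D-A))$ with $s(x) \neq 0$. Picking a K\"ahler form $\omega_A$ representing $c_1(A)$, the current $T := \tfrac{1}{m}[\mathrm{div}(s)] + \omega_A$ lies in $c_1(D)$, dominates $\omega_A$ (so is a K\"ahler current), and is smooth near $x$ since $s$ is nonvanishing there; hence $x \notin E_{nK}(c_1(D))$.

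For the reverse inclusion I would take $x \notin E_{nK}(c_1(D))$ and apply Theorem \ref{bouck} to obtain a K\"ahler current $T = \alpha + \ddbar\vp \in c_1(D)$ with analytic singularities, smooth near $x$, and $T \geq \varepsilon \omega$. The next step is to reduce to an integral line bundle: choose a sufficiently small ample $\mathbb{R}$-divisor $A_0$ with $D - A_0$ a $\mathbb{Q}$-divisor (possible since $N^1(X,\mathbb{Q})$ is dense in $N^1(X,\mathbb{R})$ and the ample cone is open), subtract a K\"ahler representative of $A_0$ from $T$ while preserving the K\"ahler current property, and clear denominators so that $L := m(D - A_0)$ is an integral line bundle carrying a K\"ahler current $T'$ with analytic singularities smooth at $x$. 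The remaining task is to produce, for some integer $k$ and a fixed ample line bundle $H$, a section $s \in H^0(X, kL - H)$ nonvanishing at $x$: rescaling then gives $A := A_0 + \tfrac{1}{km} H$ ample with $D - A$ rational and $x \notin \mathbb{B}(D-A) \supseteq \mathbb{B}_+(D)$.

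To produce this section I would invoke Nadel's vanishing theorem. Introduce a quasi-psh function $\chi$ on $X$ that is smooth away from $x$ and equals $n \log \sum_i |z_i|^2$ in local coordinates at $x$, so that $\mathcal{J}(\chi) = \mathfrak{m}_x$ near $x$ while $\ddbar\chi \geq -C\omega$ globally. For $k$ large enough, the singular weight $k\vp' + \chi$ endows $\mathcal{O}_X(kL - H - K_X)$ with a Hermitian metric of curvature $\geq (k\varepsilon' - C'')\omega > 0$, because $kT' \geq k\varepsilon'\omega$ absorbs the fixed smooth losses coming from $\omega_H$, $\omega_{K_X}$, and the lower bound on $\ddbar\chi$. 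Nadel then gives
\[
H^1\!\bigl(X,\, \mathcal{O}_X(kL - H) \otimes \mathcal{J}(k\vp' + \chi)\bigr) = 0.
\]
Since $\vp'$ is smooth at $x$ while $\chi$ has Lelong number exactly $n$ there, $\mathcal{J}(k\vp' + \chi) = \mathfrak{m}_x$ in a neighborhood of $x$, making $x$ an isolated point of the support of $\mathcal{O}_X/\mathcal{J}(k\vp' + \chi)$. The long exact sequence then surjects $H^0(X, kL - H)$ onto $H^0$ of the quotient sheaf, which contains a skyscraper section nonvanishing at $x$; lifting it yields the required section. The hard part will be precisely this Nadel step: arranging the multiplier ideal to localize as $\mathfrak{m}_x$ near $x$ while globally preserving strict positivity of the twisted curvature requires exploiting the reservoir of positivity $T' \geq \varepsilon'\omega$ and scaling it up by $k$ to dominate all fixed smooth losses from $H$, $K_X$ and $\chi$.
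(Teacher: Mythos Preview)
Your proposal is correct and follows essentially the same strategy as the paper: split on bigness, build the easy inclusion from a nonvanishing section, and for the hard inclusion subtract a small ample to rationalize, introduce a logarithmic pole at $x$, and invoke $L^2$/Nadel-type vanishing to produce a section. The only cosmetic differences are that the paper uses the potential $\log\sum_i|s_i|_h^2$ of a full basis rather than the divisor current $\tfrac{1}{m}[\mathrm{div}(s)]$ for the first inclusion, and in the second inclusion it inserts the log pole via a cutoff $\varepsilon\,\ddbar(\theta\log|z-x|^2)$ directly into $T$ before scaling and then appeals to \cite[Corollary 3.3]{Dem92} (jet generation via H\"ormander), whereas you scale first and run Nadel vanishing with an auxiliary $\chi$ and the long exact sequence; these are equivalent packagings of the same $L^2$ argument.
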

\begin{proof}
As we recalled earlier, we have that $D$ is big iff $c_1(D)$ is big.
Therefore if $D$ is not big then we have $\mathbb{B}_+(D)=E_{nK}(c_1(D))=X$, and so we may assume that $D$ is big.

First we show that $E_{nK}(c_1(D))\subset \mathbb{B}_+(D)$. If $x\not\in \mathbb{B}_+(D)$ then by definition there exists an ample $\mathbb{R}$-divisor $A$ such that $D-A$ is a $\mathbb{Q}$-divisor and its stable base locus does not contain $x$. Therefore there is $m\geq 1$ such that $m(D-A)$ is the divisor of a line bundle $L$ and there is a section $s\in H^0(X,L)$ with $s(x)\neq 0$. Complete it to a basis $\{s=s_1,s_2,\dots,s_N\}$ of $H^0(X,L)$, and fix a smooth Hermitian metric $h$ on $L$ with curvature form $R_h.$
Then
$$T=R_h+\frac{\sqrt{-1}}{2\pi}\partial\overline{\partial}\log\sum_i |s_i|^2_h,$$
is a closed positive current in $c_1(L)$, with analytic singularities and which is smooth near $x$.
If $\omega$ is a K\"ahler form in the class $c_1(A)$, then $\frac{1}{m}T+\omega$ is then a K\"ahler current on $X$ in $c_1(D)$ which is smooth near $x$, and so $x\not\in E_{nK}(c_1(D))$.

To see the reverse inclusion, assume $x\not\in E_{nK}(c_1(D))$ so that by Theorem \ref{bouck} we can find a K\"ahler current $T$ in the class $c_1(D)$ with analytic singularities which is smooth in a coordinate patch $U$ containing $x$. We can find an ample line bundle $A$ and a small $\delta'>0$ such that $D-\delta'A$ is a $\mathbb{Q}$-divisor. If $\omega$ is a K\"ahler form in $c_1(A)$, then there exists $0<\delta<\delta'$ with $\delta'-\delta\in\mathbb{Q}$ and such that $T-\delta\omega$ is still a K\"ahler current. Then $D-\delta A$ is a big $\mathbb{Q}$-divisor.
Let $\theta$ be a smooth cutoff function supported in $U$ and identically $1$ near $x$, and let
$$\ti{T}=T-\delta\omega+\ve\ddb (\theta\log|z-x|^2),$$
where $\ve>0$ is chosen small enough so that $\ti{T}$ is a K\"ahler current in $c_1(D-\delta A)$. By construction, the Lelong number $\nu(\ti{T},x)$ is equal to $\ve$, and $\ti{T}$ is smooth on $U\backslash\{x\}$. Let $\gamma$ be a smooth representative of $c_1(K_X)$, and let
$\ti{T}_m=m\ti{T}-\gamma$. For $m$ sufficiently large, $\ti{T}_m$ is a K\"ahler current in $c_1(m(D-\delta A)-K_X)$, which is smooth on $U\backslash\{x\}$ and with Lelong number $\nu(\ti{T}_m,x)=m\ve$.
If $m$ is sufficiently divisible, then $m(D-\delta A)-K_X$ is the divisor of a holomorphic line bundle $L_m$, which is big. Applying \cite[Corollary 3.3]{Dem92} (which is an application of H\"ormander's $L^2$ estimates for $\overline{\partial}$) we see that if we choose $m$ large so that $m\ve\geq 1$, then $H^0(X,K_X+L_m)=H^0(X,m(D-\delta A))$ generates $0$-jets at $x$.
Hence, there is a global section of $m(D-\delta A)$ which does not vanish at $x$, and so $x\not\in \mathbb{B}_+(D)$.
\end{proof}

The following result is well-known (see e.g. \cite[Lemma 3.1]{CT}, and \cite[Proposition 1.1]{ELMNP} in the algebraic case).
\begin{prop}\label{isola}
Let $X$ be a compact complex manifold and $[\alpha]$ a real $(1,1)$ class. Then $E_{nK}(\alpha)$ does not have any isolated points.
\end{prop}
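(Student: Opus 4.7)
My plan is to argue by contradiction. If $[\alpha]$ is not big, then $E_{nK}(\alpha)=X$ trivially has no isolated points, so the interesting case is $[\alpha]$ big; I then assume some $x\in E_{nK}(\alpha)$ is isolated and aim to derive a contradiction by producing a K\"ahler current in $[\alpha]$ which is smooth at $x$. By Theorem~\ref{bouck} I pick a K\"ahler current $T=\alpha+\ddbar\vp$ in $[\alpha]$ with analytic singularities precisely along $E_{nK}(\alpha)$; shrinking, I fix a coordinate ball $B=B_r(x)$ with $\overline B\cap E_{nK}(\alpha)=\{x\}$, so $\vp$ is smooth on $\overline B\setminus\{x\}$, $\vp(x)=-\infty$, and $T\geq\ve_0\omega$ globally for some $\ve_0>0$. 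The strategy is to modify $\vp$ inside $B_{r/2}$, gluing it to a smooth psh bump, producing a new K\"ahler current in $[\alpha]$ that is smooth at $x$.

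The modification uses Demailly's regularized maximum. On $B$ the local $\ddbar$-lemma provides a smooth real $g$ with $\alpha=\ddbar g$, so $u:=g+\vp$ is strictly psh on $B$ with $\ddbar u=T\geq\ve_0\omega$, smooth off $x$, and $u(x)=-\infty$. Set $A:=\inf_{\overline B_{r/2}\setminus B_{r/4}}u$ (finite) and introduce the smooth strictly psh model $v(z):=A-1+\delta|z|^2$ with $\delta>0$ chosen so small that $v<u-1/2$ on the outer annulus $\overline B_{r/2}\setminus B_{r/4}$; since $v$ is bounded while $u\to-\infty$ at $x$, one also has $v>u$ on a small neighborhood of $x$. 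Then define $\psi:=\max_\eta(u,v)$ with $\eta>0$ small, so that $\psi$ is smooth on $B_{r/2}$, equals $u$ near $\partial B_{r/2}$, and equals $v$ near $x$. Finally set $\ti\vp:=\psi-g$ on $B_{r/2}$ and $\ti\vp:=\vp$ on $X\setminus\overline B_{r/4}$; the two definitions agree on the overlap $B_{r/2}\setminus\overline B_{r/4}$ (where $\psi=u=g+\vp$), giving a global quasi-psh function $\ti\vp$ which is smooth in a neighborhood of $x$.

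It remains to verify that $\alpha+\ddbar\ti\vp$ is a K\"ahler current. On $X\setminus B_{r/2}$ it equals $T\geq\ve_0\omega$, while on $B_{r/2}$ it equals $\ddbar\psi$. Using the standard inequality $\ddbar\max_\eta(u,v)\geq\min(\ddbar u,\ddbar v)$ together with $\ddbar v\geq\delta c\,\omega$ on $B$ (where $c>0$ comes from $\ddbar|z|^2\geq c\omega$ on the coordinate chart), I obtain $\ddbar\psi\geq\min(\ve_0,\delta c)\,\omega$ on $B_{r/2}$. Hence $\alpha+\ddbar\ti\vp$ is a K\"ahler current in $[\alpha]$ which is smooth at $x$, contradicting $x\in E_{nK}(\alpha)$.

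The main subtlety is the joint choice of the parameters $r,\delta,\eta$: $\delta$ must be small enough (relative to $r$) for $v$ to lie strictly below $u$ on the outer annulus, $\eta$ small enough for $\max_\eta$ to reduce exactly to $u$ there (so that $\ti\vp$ glues smoothly with $\vp$ on $X\setminus\overline B_{r/4}$) and exactly to $v$ near $x$ (so the singularity is erased), while $\delta$ must nevertheless remain strictly positive so that $\ddbar v$ retains a positive lower bound. These conditions are compatible but must be ordered with care.
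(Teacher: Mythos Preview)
Your proof is correct and follows essentially the same approach as the paper's: take a K\"ahler current with analytic singularities exactly along $E_{nK}(\alpha)$ via Theorem~\ref{bouck}, and near the putative isolated singular point replace the potential by a regularized maximum with a smooth strictly psh bump, obtaining a K\"ahler current in $[\alpha]$ smooth at $x$. The only cosmetic differences are that the paper works directly with $\vp$ and the comparison function $A|z|^2-C$ (choosing $A$ large, then $C$ large), whereas you first pass to a local potential $g$ for $\alpha$ and use a bump $A-1+\delta|z|^2$ with $\delta$ small; both parametrizations achieve the same gluing.
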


In particular, this property holds for the augmented base locus $\mathbb{B}_+(L)$ for any line bundle $L$ on a projective manifold. In fact, it also holds for the stable base locus $\mathbb{B}(L)$, but this is a much deeper result of Zariski \cite{Za}, which we will present below in subsection \ref{zars}.

\begin{proof}
Recall that $E_{nK}(\alpha)=X$ iff $[\alpha]$ is not big. Therefore we may assume that $[\alpha]$ is big.
Assume that $x$ is an isolated point in $E_{nK}(\alpha)$, and choose a chart $U$ centered at $x$ with coordinates $\{z_1,\dots,z_n\}$ such that $U\cap E_{nK}(\alpha)=\{x\}.$ Choose a constant $A>0$ sufficiently large so that
$$\alpha+A \ddbar|z|^2\geq \omega,$$
on $U$. Choose $K=\alpha+\ddbar\vp$ a K\"ahler current in the class $[\alpha]$ with analytic singularities along $E_{nK}(\alpha)$, which exists by Theorem \ref{bouck}. In particular, $\vp$ is smooth on $U\backslash\{x\}$ and $\vp(x)=-\infty$. If $\widetilde{\max}$ denotes a regularized maximum function (see \cite[I.5.18]{Demb}), then we can choose a large constant $C>0$ so that the function on $U$
$$\psi:=\widetilde{\max}(\vp,A|z|^2-C),$$
is equal to $\vp$ near $\de U$ and equal to $A|z|^2-C$ near $x$. Hence $\psi$ is smooth on $U$, and it glues to $\vp$ on $X\backslash U$ to give a global function $\psi$ with analytic singularities so that $\alpha+\ddbar\psi$ is a K\"ahler current on $X$ in the class $[\alpha]$ with analytic singularities and smooth near $x$. Hence $x\not\in E_{nK}(\alpha)$, which is a contradiction.
\end{proof}

For later applications, we need one more property of the non-K\"ahler locus, which was observed in \cite[Proposition 2.3]{BBP} in the algebraic setting.
\begin{prop}\label{pull}
Let $\mu:\ti{X}\to X$ be a modification between compact complex manifolds. If $[\alpha]$ is a $(1,1)$ class on $X$ then
$$E_{nK}(\mu^*\alpha)=\mu^{-1}(E_{nK}(\alpha))\cup\mathrm{Exc}(\mu),$$
where $\mathrm{Exc}(\mu)$ is the exceptional locus of $\mu$.
\end{prop}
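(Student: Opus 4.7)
The plan is to dispose of the non-big case as a preliminary reduction, then establish the two inclusions separately. If $[\alpha]$ is not big then $\mu^*[\alpha]$ is not big either, since a K\"ahler current on $\ti{X}$ in $[\mu^*\alpha]$ would push forward via $\mu_*$ to one on $X$ in $[\alpha]$ (by the observation at the end of Section~\ref{sectbas}); then both sides of the asserted equality equal $\ti{X}$. We may therefore assume $[\alpha]$ is big.

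For the inclusion $E_{nK}(\mu^*\alpha)\subseteq \mu^{-1}(E_{nK}(\alpha))\cup\mathrm{Exc}(\mu)$, I would use Theorem~\ref{bouck} to select a K\"ahler current $T=\alpha+\ddbar\vp\geq\ve\omega$ on $X$ with analytic singularities exactly along $E_{nK}(\alpha)$, then pull it back to $\mu^*T=\mu^*\alpha+\ddbar(\vp\circ\mu)$. This is a closed positive current in $[\mu^*\alpha]$ smooth outside $\mu^{-1}(E_{nK}(\alpha))$, but it only dominates $\ve\mu^*\omega$, which degenerates along $\mathrm{Exc}(\mu)$. After reducing by Hironaka to the case that $\mu$ is a composition of blowups with smooth centers, one picks an effective divisor $F$ with $\mathrm{Supp}(F)=\mathrm{Exc}(\mu)$ such that $\mathcal{O}_{\ti{X}}(-F)$ is $\mu$-relatively ample, a smooth $\eta\in c_1(\mathcal{O}_{\ti{X}}(F))$ with $-\eta+C\mu^*\omega\geq\ti\omega$ for large $C$, and a defining section $s_F$ of $F$. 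Then for $0<\delta<\ve/C$ the current
\[
\mu^*T+\delta\ddbar\log|s_F|^2 \;\geq\; \ve\mu^*\omega-\delta\eta \;\geq\; \delta\ti\omega
\]
is a K\"ahler current in $[\mu^*\alpha]$ with analytic singularities inside $\mu^{-1}(E_{nK}(\alpha))\cup\mathrm{Exc}(\mu)$, giving the desired inclusion.

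For the reverse inclusion, the contribution of $\mu^{-1}(E_{nK}(\alpha))\setminus\mathrm{Exc}(\mu)$ is immediate: there $\mu$ is a local biholomorphism, so any K\"ahler current in $[\mu^*\alpha]$ smooth at such a point would push forward to a K\"ahler current in $[\alpha]$ smooth at its image in $E_{nK}(\alpha)$, which is impossible. The main content is the inclusion $\mathrm{Exc}(\mu)\subseteq E_{nK}(\mu^*\alpha)$, which I would prove by contradiction. Suppose $\ti T=\mu^*\alpha+\ddbar\ti\vp$ is a K\"ahler current on $\ti{X}$ smooth near some $\ti y\in\mathrm{Exc}(\mu)$, and let $V$ be an irreducible component through $\ti y$ of the positive-dimensional compact fiber $\mu^{-1}(\mu(\ti y))$. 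Since $\mu|_V$ is constant, $\mu^*\alpha|_V\equiv 0$ as a form. Choosing a compact curve $C\subset V$ through $\ti y$ and a smooth resolution $\nu:C'\to C$, one computes $\nu^*(\ti T|_C)=\ddbar(\ti\vp\circ\nu)$, and Stokes' theorem on the compact curve $C'$ gives
\[
\int_{C'}\nu^*(\ti T|_C) \;=\; \int_{C'}\ddbar(\ti\vp\circ\nu) \;=\; 0,
\]
while the inequality $\nu^*(\ti T|_C)\geq\ve\nu^*(\ti\omega|_C)$ forces strictly positive total mass (the composition $C'\to\ti{X}$ is nonconstant and $\ti\omega$ is Hermitian), the desired contradiction.

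The principal obstacle is producing the curve $C\subset V$ through $\ti y$: general positive-dimensional compact analytic subvarieties of compact complex manifolds need not contain any compact curves. This is resolved by the same Hironaka reduction employed in the first inclusion: dominating $\mu$ by a composition of smooth blowups makes every positive-dimensional fiber projective, so compact curves through every point are abundant, and a small bookkeeping argument transfers the resulting equality back to $\mu$ itself.
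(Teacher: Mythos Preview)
Your reduction to the big case and your argument for the inclusion $E_{nK}(\mu^*\alpha)\subseteq \mu^{-1}(E_{nK}(\alpha))\cup\mathrm{Exc}(\mu)$ are essentially the paper's, with one cosmetic difference: you build a single K\"ahler current in $[\mu^*\alpha]$ whose singular set is contained in the target, whereas the paper argues pointwise; both need the passage to a composition of smooth blowups, and the paper spells out the transfer to a general modification by resolving the indeterminacies of $\mu^{-1}$ and pushing forward via the induced map $\mu':Y\to\ti{X}$.

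The genuine gap is in your proof of $\mathrm{Exc}(\mu)\subseteq E_{nK}(\mu^*\alpha)$. You correctly identify that an arbitrary positive-dimensional fiber component $V$ need not contain a compact curve, but your proposed fix does not close the hole. Dominating $\mu$ by a composition of smooth blowups $\nu=\mu\circ\mu'$ gives projective fibers for $\nu$, not for $\mu$; a curve in a fiber of $\nu$ through a preimage of $\ti y$ may be contracted by $\mu'$ to the point $\ti y$, so you do not obtain a curve in $V$. If instead you try to prove the full statement for $\nu$ and ``bookkeep'' back, you need the statement for $\mu'$ as well (to compare $E_{nK}(\nu^*\alpha)=E_{nK}(\mu'^*\mu^*\alpha)$ with $E_{nK}(\mu^*\alpha)$), and $\mu'$ is in general \emph{not} a composition of smooth blowups --- this is circular. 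One can push the set-theoretic identities using $\mathrm{Exc}(\nu)=\mathrm{Exc}(\mu')\cup\mu'^{-1}(\mathrm{Exc}(\mu))$, but an unwanted term $\mu'(\mathrm{Exc}(\mu'))$ survives on the wrong side.

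The paper avoids curves entirely. With $\ti T=\mu^*\alpha+\ddbar\ti\vp$ a K\"ahler current smooth near $\ti y\in\mathrm{Exc}(\mu)$, Zariski's main theorem gives a positive-dimensional irreducible component $E$ of $\mu^{-1}(\mu(\ti y))$ through $\ti y$. Since $\mu|_E$ is constant, $(\mu^*\alpha)|_E=0$, so $\ti T|_E=\ddbar(\ti\vp|_E)\geq\ve\ti\omega|_E$ shows that $\ti\vp|_E$ is a plurisubharmonic function on the compact connected analytic space $E$. The maximum principle forces $\ti\vp|_E$ to be constant, contradicting strict plurisubharmonicity near $\ti y$. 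This works directly for an arbitrary modification and requires no reduction step for this inclusion.
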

\begin{proof}
Since the pushforward of a K\"ahler current is also a K\"ahler current, we see that if $[\alpha]$ is not big then $\mu^*[\alpha]$ is not big either.
As remarked earlier, $[\alpha]$ is not big iff $E_{nK}(\alpha)=X$, and therefore we may assume that $[\alpha]$ is big.
We may also assume that $\mathrm{Exc}(\mu)\neq \emptyset$, otherwise
$\mu$ is a biholomorphism and the result is obvious.

First, let $x\not\in E_{nK}(\mu^*\alpha)$ so there exists a K\"ahler current $T=\mu^*\alpha+\ddbar\vp$ which is smooth near $x$.  If $x\in \mathrm{Exc}(\mu),$ Zariski's main theorem
implies that there exists an irreducible component $E$ of the fiber $\mu^{-1}(\mu(x))$ which is positive dimensional, although it may not be smooth.
 Then $T$ can be restricted to $E$ since $T$ is smooth near $x$, and we have
$$T|_E=(\mu^*\alpha+\ddbar\vp)|_E=\ddbar(\vp|_E)\geq \ve\ti{\omega}|_E,$$
for some $\ve>0$ and Hermitian form $\ti{\omega}$ on $\ti{X}$. Hence $\vp|_E$ is strictly plurisubharmonic on the analytic space $E$, but since $E$ is compact and connected $\vp|_E$ must be constant, which is a contradiction.

Therefore $x\not\in \mathrm{Exc}(\mu)$, and so $\mu$ is an isomorphism near $x$. Hence $\mu_*T$ is a K\"ahler current on $X$ in the class $[\alpha]$ which is smooth near $\mu(x)$, i.e.
$\mu(x)\not\in E_{nK}(\alpha)$. We have thus proved that $\mu^{-1}(E_{nK}(\alpha))\cup\mathrm{Exc}(\mu)\subset E_{nK}(\mu^*\alpha)$.

Assume conversely that $x\not\in \mu^{-1}(E_{nK}(\alpha))\cup\mathrm{Exc}(\mu)$. First, let
us assume that $\mu$ is a composition of blowups with smooth centers. Then $\mu$ is an isomorphism near $x$ and there is a K\"ahler current $T$ on $X$ in the class $[\alpha]$ which is smooth near $\mu(x)$. Then $\mu^*T$ is a closed positive current on $\ti{X}$ in the class $\mu^*[\alpha]$ which is smooth near $x$, and which satisfies
$\mu^*T\geq\ve\mu^*\omega$ as currents on $\ti{X}$, where $\omega$ is a Hermitian form on $X$ and $\ve>0$.

We now perturb $\mu^*T$ in its class to make it a K\"ahler current.
More specifically, we claim that there is a quasi-psh function $f$, smooth near $x$, such that $\mu^*\omega+\ddbar f$ is a K\"ahler current on $\ti{X}$.
Consider first the case when $\mu$ is the blowup of $X$ along a smooth submanifold, with exceptional divisor $E$. Then it is well-known (see e.g. \cite[Lemma 3.5]{DP}) that we can find $\delta>0$ and a smooth closed real $(1,1)$ form $\eta$ on $\ti{X}$, in the same cohomology class as the current of integration $[E]$, such that
$\mu^*\omega-\delta\eta$ is a K\"ahler form on $\ti{X}$. Writing $\eta=[E]-\ddbar h$ for some quasi-psh function $h$, smooth off $E$, we have that
\begin{equation}\label{kc}
\mu^*\omega+\delta\ddbar h=\mu^*\omega-\delta\eta+\delta[E]\geq \mu^*\omega-\delta\eta,
\end{equation}
is a K\"ahler current. So in this case the claim is proved, with $f=\delta h$. The general case when $\mu$ is a composition of blowups with smooth centers follows similarly.

It follows that $\mu^*T+\ve\ddbar f\geq \ve\mu^*\omega+\ve\ddbar f$ is a K\"ahler current on $\ti{X}$, in the class $\mu^*[\alpha]$, which is smooth near $x$. Therefore, $x\not\in E_{nK}(\mu^*\alpha)$, as desired.

Lastly, we consider the case when $\mu$ is a general modification. By resolving the indeterminacies of the bimeromorphic map $\mu^{-1}:X\dashrightarrow \ti{X}$, we obtain a modification
$\nu:Y\to X$, which is a composition of blowups with smooth centers, and a holomorphic map $\mu':Y\to\ti{X}$ such that $\mu\circ\mu'=\nu$, and $\nu$ is an isomorphism outside $\mu(\mathrm{Exc}(\mu)),$ so
in particular near $\mu(x)$.
The map $\mu'$ is bimeromorphic, and it is an isomorphism near $x$ because $x\not\in \mathrm{Exc}(\mu)$. We also have $\mu(x)\not\in E_{nK}(\alpha)$ by assumption.
If $y\in Y$ is the preimage of $\mu(x)$ under the local isomorphism $\nu$, then there is a K\"ahler current $T$ on $X$ in the class $[\alpha]$ which is smooth near $\mu(x)$, and then
$\nu^*T$ is a closed positive current on $Y$ in the class $\nu^*[\alpha]$ which is smooth near $y$ and satisfies
$\nu^*T\geq\ve\nu^*\omega$ as currents on $Y$, where $\omega$ is a Hermitian form on $X$ and $\ve>0$. Since $\nu$ is a composition of blowups with smooth centers, our previous argument shows that
there is a K\"ahler current $\ti{T}$ on $Y$ in the class $\nu^*[\alpha]$ which is smooth near $y$. Then $\mu'_{*}\ti{T}$ is a K\"ahler current on $\ti{X}$
in the class $\mu'_{*}\nu^*[\alpha]=\mu'_{*}\mu'^*\mu^*[\alpha]=\mu^*[\alpha],$ smooth near $x$, i.e. $x\not\in E_{nK}(\mu^*\alpha)$, as required.
\end{proof}

\section{The main theorem}\label{sectmain}

\subsection{The null locus}
Let $X$ be a projective manifold and $L$ a nef line bundle over $X$. As mentioned earlier, $c_1(L)$ is nef, in the sense that for every $\ve>0$ there exists a smooth function $\vp_\ve$ such that $R_h+\ddbar\vp_\ve\geq -\ve\omega$ on $X$, where $\omega$ is a K\"ahler form on $X$, $h$ is any fixed smooth Hermitian metric on $L$ and $R_h$ is its curvature form (which represents $c_1(L)$). In particular, if $V\subset X$ is a positive-dimensional irreducible analytic subvariety of $X$ then we have
\begin{equation}\label{integg}\begin{split}
(V\cdot L^{\dim V})&=\int_V c_1(L)^{\dim V}=\int_V (R_h+\ddbar\vp_\ve)^{\dim V}\\
&=\lim_{\ve\to 0}\int_V (R_h+\ddbar\vp_\ve+\ve\omega)^{\dim V}\geq 0,
\end{split}\end{equation}
which is a result of Kleiman \cite{Kl} (which however was used in \cite{Dem92} to show that $c_1(L)$ is nef). Here the integrals of forms over $V$ are really improper integrals over the regular part of $V$, and the fact that these are finite, together with the justification of Stokes' Theorem, is a classical result of Lelong (see e.g. \cite{GH}).

Following Nakamaye \cite{Na} and Keel \cite{Ke} we define the null locus of $L$ to be
$$\Null(L)=\bigcup_{(V\cdot L^{\dim V})=0}V,$$
where the union is over all irreducible positive-dimensional analytic subvarieties $V\subset X$ with $(V\cdot L^{\dim V})=0$. This is clearly a numerical invariant of $L$, and it is in fact a closed analytic subvariety of $X$, although this is not entirely obvious. Clearly, if $L$ is ample then $\Null(L)=\emptyset$, and the converse is also true thanks to the Nakai-Moishezon ampleness criterion (see e.g. \cite[Theorem 1.2.23]{Laz}).

We have that $\Null(L)\neq X$ iff $(X\cdot L^n)>0$, and since $L$ is nef, this is true iff $L$ is big (using Riemann-Roch, see \cite[Theorem 2.2.16]{Laz}). More generally, $\Null(L)$ is just the union of all irreducible subvarieties $V$ such that $L|_V$ is not big.

As we have just seen, $\Null(L)$ satisfies the same basic properties as $\mathbb{B}_+(L)$, and both measure the failure of ampleness of $L$. Nakamaye's striking observation is that in fact these two loci coincide, and this is the content of:

\subsection{Nakamaye's Theorem}\label{nakas}
\begin{thm}[Nakamaye \cite{Na}]\label{naka}
Let $X$ be a projective manifold and $L$ a nef and big line bundle over $X$. Then
$$\mathbb{B}_+(L)=\Null(L).$$
\end{thm}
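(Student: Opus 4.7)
The plan is to use Theorem \ref{bplus} to reduce the statement to the purely transcendental identity $E_{nK}(c_1(L)) = \Null(L)$, and establish the two inclusions separately.

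For $\Null(L) \subset E_{nK}(c_1(L))$ (the easy direction): suppose a positive-dimensional irreducible subvariety $V$ with $(V \cdot L^{\dim V}) = 0$ contains a point $x \notin E_{nK}(c_1(L))$. By Theorem \ref{bouck} there is a K\"ahler current $T \in c_1(L)$ with analytic singularities that is smooth on a neighborhood $U$ of $x$. Passing to a log-resolution $\mu \colon \ti X \to X$ of the ideal defining the singularities of $T$, one writes $\mu^* T = \theta + c[D]$ with $\theta$ a smooth semi-positive form and $D$ an effective divisor; since $\mu$ is an isomorphism over $U$, the divisor $D$ avoids $\mu^{-1}(x)$ and $\theta \geq \ve\,\mu^*\omega$ near $\mu^{-1}(x)$. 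Letting $\ti V$ denote the strict transform, $\ti V \not\subset D$, so
$$\int_V c_1(L)^{\dim V} = \int_{\ti V} \mu^*c_1(L)^{\dim V} \geq \int_{\ti V \cap \mu^{-1}(U)} \theta^{\dim V} > 0,$$
contradicting $V \subset \Null(L)$.

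For $E_{nK}(c_1(L)) \subset \Null(L)$ (the hard direction) I would proceed by induction on $n = \dim X$; the base $n = 1$ is immediate from the Nakai--Moishezon criterion. For the inductive step, pick an irreducible component $V$ of $E_{nK}(c_1(L))$, necessarily positive-dimensional by Proposition \ref{isola}, and a resolution $\nu \colon \ti V \to V$. The class $[\beta] := \nu^*(c_1(L)|_V)$ is nef on $\ti V$, and Theorem \ref{dpmass} implies $[\beta]$ is big iff $\int_{\ti V} \beta^{\dim V} > 0$. By the projection formula the latter equals $\int_V c_1(L)^{\dim V}$, so if we can rule out bigness of $[\beta]$ we obtain $V \subset \Null(L)$. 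Assume, toward a contradiction, that $[\beta]$ is big. Then Theorem \ref{bouck} applied on $\ti V$ supplies a K\"ahler current $S$ on $\ti V$ in $[\beta]$ with analytic singularities, and the strategy is to extend (or ``lift'') $S$ to a K\"ahler current $T$ on $X$ in $c_1(L)$ whose singular set does \emph{not} contain a general point of $V$, which contradicts $V \subset E_{nK}(c_1(L))$.

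The extension step is the main obstacle and the technical heart of the proof. My approach would be to start from a fixed K\"ahler current $T_0 \in c_1(L)$ (available because $L$ is big), introduce a quasi-psh function on a tubular neighborhood of $V$ whose restriction to $\ti V$ reproduces the potential of $S$, and glue it to $T_0$ via a regularized maximum construction in the spirit of Proposition \ref{isola}. The resulting current lies in $c_1(L)$ and is more regular than $T_0$ at a general point of $V$, but the gluing introduces an error which must be absorbed; I would do this by solving a complex Monge--Amp\`ere equation on $X$ with prescribed measure, of the mass-concentration type used behind Theorem \ref{dpmass}, exploiting $\int_X c_1(L)^n > 0$. The inductive hypothesis enters, through Proposition \ref{pull}, to control the extended current along the smaller-dimensional strata of $E_{nK}(c_1(L))$ so that its singular locus meets $V$ in a proper subset.
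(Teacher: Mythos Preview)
Your overall plan coincides with the paper's: invoke Theorem \ref{bplus} to reduce to $E_{nK}(c_1(L))=\Null(L)$, and then prove the two inclusions. Your easy inclusion is essentially correct; the displayed inequality $\int_V c_1(L)^{\dim V}\geq\int_{\ti V\cap\mu^{-1}(U)}\theta^{\dim V}$ is not immediate from cohomology (the cross-terms with $[D]$ have no sign), but it follows from Boucksom's volume formula for nef classes, exactly as in the proof of Theorem \ref{localamp}. The paper's own argument for this inclusion in section \ref{sectproof} proceeds differently, via an integration-by-parts using nef representatives $\mu^*\alpha+\ddbar\vp_\ve\geq -\ve\ti\omega$, but either route works.

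For the hard inclusion you have correctly located the crux: if a component $V$ of $E_{nK}(\alpha)$ had $\int_V\alpha^{\dim V}>0$, one must extend a K\"ahler current from $V$ to $X$ and thereby contradict $V\subset E_{nK}(\alpha)$. However, your proposed execution of this step is where the gap lies. The induction on $\dim X$ is never actually used in your sketch, and it is unclear how Proposition \ref{pull} combined with the inductive hypothesis would control anything about the extended current; the paper does \emph{not} argue by induction on dimension. Likewise, the idea of absorbing the gluing error by solving a mass-concentration Monge--Amp\`ere equation is vague: that technique produces K\"ahler currents with no control on their singular locus, which is precisely the opposite of what is needed here.

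What the paper actually does is the following. First pass to a modification $\mu:\ti X\to X$ that principalizes the ideal sheaf of singularities of the K\"ahler current $S$ on $V$, so that on the proper transform $\ti V$ the potential has simple-normal-crossing singularities along a divisor $E$. On each coordinate chart $W_j$ adapted to $\ti V$ and $E$, the potential extends trivially in the normal directions (set $\vp_j(z,z')=\vp'(0,z')+A|z|^2$). The key observation is that on overlaps $W_j\cap W_k$ the differences $\vp_j-\vp_k$ are \emph{continuous}, because both have the same SNC-type singularity along $E$; this is exactly what is needed to run Richberg's gluing and obtain a single potential $\rho$ on a neighborhood $U$ of $\ti V$. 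Finally, one glues $\rho$ to the global K\"ahler current $K=\alpha+\ddbar\psi$ (which is $-\infty$ on $V$) via a max construction; the crucial trick here is to use nefness of $[\mu^*\alpha]$ to replace $\psi'$ by the attenuated potential $\ve\psi'+(1-\ve)\vp_\ve$, so that near $\partial\ti U$ the global side dominates. None of these ingredients---the SNC resolution, the continuity of differences enabling Richberg gluing, and the nefness attenuation---appear in your sketch, and they are the substance of the argument.
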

Of course this theorem is still true if $L$ is nef but not big, in which case $\mathbb{B}_+(L)=\Null(L)=X$.

Clearly Nakamaye's Theorem also holds when $L$ is replaced by a $\mathbb{Q}$-divisor $D$. In fact it also holds for nef $\mathbb{R}$-divisors, as shown by Ein-Lazarsfeld-Musta\c{t}\u{a}-Nakamaye-Popa \cite[Corollary 5.6]{ELMNP}.

There are many further generalizations of this theorem, including the case of positive characteristic \cite{CMM}, the case when $X$ is singular \cite{CL, Bi}.

We consider here its generalization to $(1,1)$ classes on complex manifolds. Let $[\alpha]$ be a nef $(1,1)$ class on a compact complex manifold $X$. If the manifold is K\"ahler, and if $V\subset X$ is any irreducible analytic subvariety then the same argument as in \eqref{integg} shows that
\begin{equation}\label{posi}
\int_V \alpha^{\dim V}\geq 0.
\end{equation}

If the manifold is not K\"ahler, it is not clear whether \eqref{posi} holds. But if we assume that $[\alpha]$ is nef and big, then
the existence of a big $(1,1)$ class implies that
$X$ is in class $\mathcal{C}$ by \cite[Theorem 3.4]{DP}.
Furthermore, every irreducible analytic subvariety $V\subset X$ is in class $\mathcal{C}$ as well \cite[Lemma 4.6]{Fu2}. Taking a K\"ahler modification of $V$ we conclude as above that \eqref{posi} holds.

Motivated by this, we define the null locus of a nef and big class $[\alpha]$ on a compact complex manifold to be
$$\Null(\alpha)=\bigcup_{\int_V\alpha^{\dim V}=0}V,$$
which is consistent with the algebraic definition if $[\alpha]=c_1(L)$. The main theorem of \cite{CT} is then the following:

\begin{thm}[Collins-T. \cite{CT}]\label{ct}
Let $X$ be a compact complex manifold and $[\alpha]$ a nef and big $(1,1)$ class on $X$. Then
\begin{equation}\label{key}
E_{nK}(\alpha)=\Null(\alpha).
\end{equation}
\end{thm}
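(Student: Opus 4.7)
\textbf{Easy direction} $\Null(\alpha)\subseteq E_{nK}(\alpha)$. Let $V\subset X$ be irreducible, positive-dimensional, with $V\not\subseteq E_{nK}(\alpha)$; the aim is to show $\int_V\alpha^{\dim V}>0$. By Theorem \ref{bouck}, fix a K\"ahler current $T=\alpha+\ddbar\vp\in[\alpha]$ with analytic singularities exactly along $E_{nK}(\alpha)$. Pick a point $p\in V_{\mathrm{reg}}\setminus E_{nK}(\alpha)$, around which $T$ is a smooth K\"ahler form with $T\geq\ve_0\omega$. Since $[\alpha]$ is big, $X$ (and hence $V$) is in Fujiki's class $\mathcal{C}$; let $\mu:\tilde V\to V$ be a K\"ahler resolution. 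Then $\mu^*T$ is a well-defined closed positive current on $\tilde V$ in the nef class $\mu^*[\alpha|_V]$, smooth and bounded below by $\ve_0\mu^*\omega$ on an open set in $\tilde V$. Using nefness to approximate $\mu^*\alpha$ by smooth semipositive forms, together with the monotonicity of non-pluripolar Monge--Amp\`ere products on the K\"ahler manifold $\tilde V$, one obtains
\[
\int_V\alpha^{\dim V}=\int_{\tilde V}(\mu^*\alpha)^{\dim V}\;\geq\;\int_{\tilde V}\bigl\langle(\mu^*T)^{\dim V}\bigr\rangle\;>\;0,
\]
the strict inequality holding since $\mu^*T$ is a genuine K\"ahler form on an open subset of $\tilde V$. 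Contrapositively, this establishes $\Null(\alpha)\subseteq E_{nK}(\alpha)$.

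\textbf{Hard direction} $E_{nK}(\alpha)\subseteq\Null(\alpha)$, which is the heart of Nakamaye's theorem. Given $x\in X\setminus\Null(\alpha)$, I plan to construct a K\"ahler current $T\in[\alpha]$ smooth near $x$ by induction on $n=\dim X$. The base case $n=1$ is trivial since on a curve a big nef class is K\"ahler. For the inductive step, the assumption $x\not\in\Null(\alpha)$ has the following consequence: for every irreducible subvariety $V\ni x$ with $0<\dim V<n$, on a K\"ahler resolution $\tilde V\to V$ the pullback of $[\alpha|_V]$ is nef and big with $x$ not in its null locus, so by induction it admits a K\"ahler current in $[\alpha|_V]$ that is smooth near $x$. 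These ``smooth models'' on all subvarieties through $x$ must now be globalized to a single K\"ahler current on $X$.

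The globalization I envision is a mass-concentration argument in the spirit of Demailly--P\u{a}un--Chiose (Theorem \ref{dpmass}): solve a family of degenerate complex Monge--Amp\`ere equations $(\alpha+\ve\omega+\ddbar\vp_\ve)^n=c_\ve f_\ve\omega^n$ for weights $f_\ve$ concentrated away from $x$, and extract a weak limit $T\in[\alpha]$, which nefness and bigness upgrade to a K\"ahler current. The main obstacle, and the technical core of the proof, is controlling the Lelong numbers of $T$: a priori, mass can accumulate on a positive-dimensional subvariety $V\ni x$, yielding a divisor of singularities on $V$ passing through $x$. The inductive hypothesis on $V$ is exactly what rules this out, by furnishing competitor K\"ahler currents on $V$ of arbitrarily small Lelong number at $x$ that bound candidate limits from below; Proposition \ref{pull} is the bookkeeping device to transfer such information between $X$ and resolutions of $V$. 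Once the Lelong number of $T$ at $x$ is shown to vanish, Demailly's regularization replaces $T$ by a K\"ahler current with analytic singularities that is smooth near $x$, certifying $x\not\in E_{nK}(\alpha)$.
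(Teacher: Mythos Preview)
Your easy direction is fine; the paper uses a direct integration-by-parts iteration (peeling off one factor of $\mu^*\alpha+\ve\ti\omega$ at a time and replacing it by $T'$) instead of non-pluripolar products and Boucksom's volume theorem, but the two routes are interchangeable here. The hard direction, however, has a genuine gap: the mechanism you propose for controlling Lelong numbers does not work. You claim the inductive hypothesis ``furnish[es] competitor K\"ahler currents on $V$ \ldots\ that bound candidate limits from below,'' but a K\"ahler current $S$ living on the subvariety $V$ cannot bound a potential on the ambient $X$ without first being \emph{extended} to a neighborhood of $V$ in $X$. The mere existence of some current in $[\alpha|_V]$ smooth near $x$ places no constraint on the Lelong number at $x$ of the specific current $T\in[\alpha]$ that your mass concentration produces on $X$: there is no comparison principle linking $T|_V$ to $S$, and different currents in a fixed class can have arbitrary Lelong numbers at a given point. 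Proposition~\ref{pull} does not help either, since it concerns modifications of $X$, not the passage from $V$ to $X$.

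The paper's approach confronts this extension problem head-on and is structurally quite different from yours. There is no induction on $\dim X$ and no Lelong-number bookkeeping. Instead, one takes an irreducible \emph{component} $V$ of $E_{nK}(\alpha)$ through a point $x\notin\Null(\alpha)$, observes that $[\alpha|_V]$ is nef with $\int_V\alpha^{\dim V}>0$ and hence big by Theorem~\ref{dpmass}, and fixes a K\"ahler current $T$ on $V$ with analytic singularities. The technical core is an \emph{extension theorem}: after blowing up $X$ to principalize the singular ideal of $T$ (so that the singularities become a simple normal crossings divisor meeting the proper transform $\ti V$ transversally), one extends the potential trivially in local product charts and patches the local extensions by a Richberg-type gluing, the key point being that on overlaps the local extensions differ by \emph{continuous} functions. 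The resulting K\"ahler current on a neighborhood of $\ti V$ is then glued, via a regularized maximum, to an attenuation $\ve\psi'+(1-\ve)\vp_\ve$ of the global K\"ahler current singular along $E_{nK}(\alpha)$, producing a K\"ahler current in $\mu^*[\alpha]$ smooth at the generic point of $\ti V$. Pushing forward contradicts $V\subset E_{nK}(\alpha)$. The extension and gluing constitute the entire content of the hard direction; your outline does not touch them.
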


Using Theorem \ref{bplus}, we immediately see that Theorem \ref{ct} implies Nakamaye's Theorem \ref{naka}, and this gives a new, analytic proof of that result (as well as its extension to $\mathbb{R}$-divisors \cite{ELMNP}).
\section{Applications}\label{sectappl}
In this section we give some applications of Theorem \ref{ct} to various related topics. Further applications of Theorem \ref{ct} to the Minimal Model Program for K\"ahler manifolds as well as to Zariski decompositions, we which did not include for the sake of conciseness, can be found in \cite{CHP,CH,Den,HP}.

\subsection{The Demailly-P\u{a}un Theorem}

\begin{thm}[Demailly-P\u{a}un \cite{DP}]\label{dp}
Let $X$ be a compact K\"ahler manifold, and let
$\mathcal{P}\subset H^{1,1}(X,\mathbb{R})$ be the cone of $(1,1)$ classes $[\alpha]$ which satisfy
$$\int_V\alpha^{\dim V}>0,$$
for all positive-dimensional irreducible analytic subvarieties $V\subset X$. Then the K\"ahler cone of $X$ is one of the connected components of $\mathcal{P}$.
\end{thm}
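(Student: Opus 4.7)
The plan is to leverage Theorem~\ref{ct} so that the Demailly-P\u{a}un characterization reduces to a clean topological argument. Write $\mathcal{K} \subset H^{1,1}(X,\mathbb{R})$ for the K\"ahler cone. I would first observe that $\mathcal{K}$ is an open, convex, connected cone that is manifestly contained in $\mathcal{P}$: for a K\"ahler form $\alpha$, the integral $\int_V \alpha^{\dim V}$ is, up to a dimensional constant, the volume of the regular part of $V$ with respect to the metric induced by $\alpha$, hence strictly positive. Since $\mathcal{K}$ is open in $H^{1,1}(X,\mathbb{R})$, to identify $\mathcal{K}$ with a connected component of $\mathcal{P}$ it suffices to show that $\mathcal{K}$ is also closed inside $\mathcal{P}$, equivalently that the topological boundary $\partial \mathcal{K}$ is disjoint from $\mathcal{P}$. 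Using the standard fact that $\overline{\mathcal{K}}$ coincides with the nef cone (if $[\alpha]$ is nef and $\omega_0$ is a K\"ahler form on $X$, then $[\alpha]+\varepsilon[\omega_0]$ is K\"ahler for every $\varepsilon>0$), this reduces the entire theorem to a single implication: if $[\alpha]$ is nef and lies in $\mathcal{P}$, then $[\alpha]$ is K\"ahler.

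To establish this implication I would argue as follows. Suppose $[\alpha]$ is nef and $\int_V \alpha^{\dim V} > 0$ for every positive-dimensional irreducible analytic subvariety $V \subset X$. Taking $V=X$ yields $\int_X \alpha^n > 0$, so Theorem~\ref{dpmass} (applicable because $X$ is K\"ahler, hence trivially in Fujiki class $\mathcal{C}$) implies that $[\alpha]$ is big. The class $[\alpha]$ is therefore both nef and big, and Theorem~\ref{ct} applies to give
\[
E_{nK}(\alpha) = \Null(\alpha).
\]
The hypothesis $[\alpha]\in\mathcal{P}$ is exactly the statement that no positive-dimensional subvariety contributes to the union defining $\Null(\alpha)$, so $\Null(\alpha)=\emptyset$, whence $E_{nK}(\alpha)=\emptyset$. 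As observed immediately after Theorem~\ref{bouck}, this is equivalent to $[\alpha]$ being a K\"ahler class, completing the implication.

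All the analytic content is packaged into Theorems~\ref{dpmass} and~\ref{ct}; everything else is topological repackaging. Consequently there is no real obstacle once those two results are in hand, and the main interest of this plan lies precisely in exhibiting the Demailly-P\u{a}un theorem as a direct corollary of the transcendental Nakamaye theorem rather than as an independent input.
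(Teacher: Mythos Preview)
Your proposal is correct and follows essentially the same approach as the paper's own proof: reduce to showing that the K\"ahler cone is closed in $\mathcal{P}$, take a nef class on the boundary lying in $\mathcal{P}$, invoke Theorem~\ref{dpmass} to get bigness, and then apply Theorem~\ref{ct} to conclude $E_{nK}(\alpha)=\Null(\alpha)=\emptyset$, hence $[\alpha]$ is K\"ahler. The only differences are cosmetic (you spell out a few justifications the paper leaves implicit).
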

Furthermore, if $X$ is a projective manifold, then it is not hard to see that in fact the K\"ahler cone equals $\mathcal{P}$. This is a vast generalization of the Nakai-Moishezon ampleness criterion for line bundles (see e.g. \cite[Theorem 1.2.23]{Laz}).

\begin{proof}
Clearly every K\"ahler class is in $\mathcal{P}$, and the K\"ahler cone is open and convex, hence connected, and so it suffices to show that it is closed inside $\mathcal{P}$. It follows easily from the definition that $(1,1)$ classes in the closure of the K\"ahler cone are nef (and conversely). Assume then that $[\alpha]$ is a nef $(1,1)$ class which is also in $\mathcal{P}$. In particular, $$\int_X\alpha^n>0,$$
and so Theorem \ref{dpmass} shows that $[\alpha]$ is big, and by assumption $\Null(\alpha)=\emptyset$. By Theorem \ref{ct} we have that $E_{nK}(\alpha)=\emptyset$, i.e. $[\alpha]$ is a K\"ahler class, as required.
\end{proof}

Note that Theorem \ref{dp} (or Theorem \ref{ct}) also shows that if $[\alpha]$ is a nef class on a compact K\"ahler manifold then $\Null(\alpha)=\emptyset$ iff $[\alpha]$ is K\"ahler, a fact that we will use later.

An extension of Theorem \ref{dp} to the case when $X$ is a singular compact K\"ahler analytic space, embedded in a smooth ambient manifold, was recently obtained in \cite{CT3}, also as an application of Theorem \ref{ct}.

\subsection{The Fujita-Zariski Theorem}\label{zars} Following a strategy suggested by M. P\u{a}un (which was communicated to the author by R. Lazarsfeld), we give an analytic proof of a well-known theorem of Fujita \cite[Theorem 1.10]{Fu} (see also \cite{Ein} for another algebraic proof), and its extension to complex manifolds.

\begin{thm}\label{main}
Let $X$ be a compact complex manifold and $L$ a holomorphic line bundle. If the restriction of $L$ to its base locus ${\rm Bs}(L)$ is ample, then $L$ is semiample (i.e. $L^m$ is base point free for some $m\geq 1$).
\end{thm}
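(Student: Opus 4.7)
If $\mathrm{Bs}(L)=\varnothing$, then $L$ itself is base point free, so it is semiample with $m=1$. If $H^0(X,L)=0$, then by convention $\mathrm{Bs}(L)=X$, and the hypothesis gives that $L$ itself is ample on $X$; by Kodaira's embedding theorem some power $L^m$ is then very ample, hence base point free. We may therefore assume $H^0(X,L)\neq 0$ and that $B:=\mathrm{Bs}(L)$ is a nonempty proper analytic subvariety of $X$.

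\textbf{Overall strategy.} The plan is to verify the hypotheses of Theorem~\ref{ct} for $c_1(L)$, while arranging that $E_{nK}(c_1(L))\subseteq B$. Concretely I want to show: $c_1(L)$ is nef and big, and there is a K\"ahler current in $c_1(L)$ whose analytic singularities are contained in $B$. Theorem~\ref{ct} will then yield
$$\mathrm{Null}(c_1(L))=E_{nK}(c_1(L))\subseteq B.$$
For any positive-dimensional irreducible analytic subvariety $V\subseteq B$, the ampleness of $L|_B$ together with the Nakai--Moishezon criterion (applied on a resolution of $B$ if $B$ is singular, using that ampleness lifts) gives $\int_V c_1(L)^{\dim V}>0$, so no such $V$ lies in $\mathrm{Null}(c_1(L))$. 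Since by Proposition~\ref{isola} the null locus has no isolated points either, we conclude $\mathrm{Null}(c_1(L))=\varnothing$. Hence $c_1(L)$ is K\"ahler and $L$ is ample by Kodaira's theorem, so in particular $L$ is semiample.

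\textbf{Key construction.} Fix a smooth Hermitian metric $h$ on $L$ with curvature $\alpha=R_h\in c_1(L)$, and a basis $s_0,\dots,s_N$ of $H^0(X,L)$. Setting $\varphi_0:=\log\sum_i |s_i|_h^2$, the current $T_0:=\alpha+\tfrac{\sqrt{-1}}{2\pi}\de\dbar\varphi_0\geq 0$ lies in $c_1(L)$ and has analytic singularities exactly along $B$. On the other hand, the ampleness of $L|_B$ (after resolving $B$ if needed) produces a smooth Hermitian metric $h'$ on $L$ defined on an open neighborhood $U$ of $B$ with $R_{h'}\geq\varepsilon\,\omega$ on a smaller neighborhood $U'\Subset U$; the smooth weight $\psi:=\log(h/h')$ then satisfies $\alpha+\tfrac{\sqrt{-1}}{2\pi}\de\dbar\psi\geq\varepsilon\,\omega$ on $U'$. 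A regularized-maximum construction analogous to the one used in the proof of Proposition~\ref{isola} glues a suitably shifted $\psi$ (near $B$) with $\varphi_0$ (away from $B$) into a global quasi-psh function $\varphi$ on $X$, whose analytic singularities are contained in $B$ and for which $T:=\alpha+\tfrac{\sqrt{-1}}{2\pi}\de\dbar\varphi$ is a K\"ahler current in $c_1(L)$. This simultaneously shows $c_1(L)$ is big and $E_{nK}(c_1(L))\subseteq B$, and the existence of such a K\"ahler current places $X$ in Fujiki's class~$\mathcal{C}$. Nefness of $c_1(L)$ is then obtained separately by Demailly regularization of $T$ along its analytic singularities, yielding smooth $\varphi_\varepsilon$ with $\alpha+\tfrac{\sqrt{-1}}{2\pi}\de\dbar\varphi_\varepsilon\geq-\varepsilon\,\omega$ for every $\varepsilon>0$. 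With nef and big in hand, Theorem~\ref{ct} applies and the argument from the previous paragraph concludes the proof.

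\textbf{Main obstacle.} The heart of the argument is the production of the K\"ahler current $T\in c_1(L)$ with analytic singularities contained in $B$. The section-built $T_0$ is only semi-positive on $X\setminus B$, so strict global positivity must be supplied by the locally strictly positive curvature of $h'$ near $B$; the delicate point is to choose the shift constants so that the regularized maximum of $\psi$ and $\varphi_0$ gives a global quasi-psh function whose associated current is genuinely $\geq\varepsilon\,\omega$ on all of $X$ (and not just on a neighborhood of $B$), while still staying in the cohomology class $c_1(L)$ and having analytic singularities only along $B$. An additional technical point is that $B$ need not be smooth, and one may have to pass to a resolution $\widetilde B\to B$ and invoke that ample line bundles lift to nef and big ones.
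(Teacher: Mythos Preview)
Your argument has a genuine gap, and it is precisely the one you flag as the ``main obstacle'': the glued potential $\varphi=\widetilde{\max}(\psi-C,\varphi_0)$ does \emph{not} give a K\"ahler current. Away from the neighbourhood of $B$ the function agrees with $\varphi_0$, and $\alpha+\tfrac{\sqrt{-1}}{2\pi}\de\dbar\varphi_0\geq 0$ is only \emph{semipositive} there; no choice of shift constants changes this. Your appeal to Demailly regularization to deduce nefness is fine in spirit (though what one really gets from the gluing is already a smooth form $\geq 0$, which gives nefness directly), but the assertion $E_{nK}(c_1(L))\subseteq B$ does not follow, and in fact it is \emph{false} in general. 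Concretely, let $S$ be an abelian surface with a principal polarization $\Theta$, pick $q\in S\setminus\Theta$, set $X=\Bl_qS$ with exceptional curve $E$, and let $L=\pi^*\mathcal{O}_S(\Theta)$. Then $H^0(X,L)\cong H^0(S,\mathcal{O}_S(\Theta))$ is one-dimensional, so $B=\mathrm{Bs}(L)=\pi^{-1}(\Theta)\cong\Theta$, and $L|_B$ has degree $\Theta^2=2>0$, hence is ample. But $L$ is nef and big with $L\cdot E=0$, so $E_{nK}(c_1(L))=\Null(L)=E$, which is disjoint from $B$. In particular $L$ is semiample (since $3\Theta$ is very ample on $S$) but \emph{not} ample: your conclusion ``$c_1(L)$ is K\"ahler, hence $L$ is ample'' is too strong.

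The paper's proof avoids this trap. It performs essentially the same gluing you describe, but only to produce a \emph{smooth} form $\alpha+\ddbar\Psi\geq 0$ which is strictly positive in a neighbourhood of $B$ (no global strict positivity is claimed). This already yields nefness, and Siu's criterion gives bigness and Moishezon-ness of $X$. For any $x\in B$ and any $V\ni x$ one then computes $\int_V(\alpha+\ddbar\Psi)^{\dim V}>0$ directly from the smooth form, so $x\notin\Null(L)$, hence by Theorem~\ref{ct} $x\notin E_{nK}(c_1(L))$. The remaining step, which has no analogue in your outline, is to pass to a projective resolution $\mu:\ti{X}\to X$ and use Theorem~\ref{bplus} there: since $E_{nK}(c_1(\mu^*L))=\mathbb{B}_+(\mu^*L)\supseteq\mathbb{B}(\mu^*L)$, one concludes $\mu^{-1}(x)\notin\mathbb{B}(\mu^*L)$, hence $x\notin\mathbb{B}(L)$, contradicting $x\in B$. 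The conclusion is only $\mathbb{B}(L)=\varnothing$, i.e.\ semiampleness --- exactly what the theorem asserts, and no more.
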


As an immediate corollary, we obtain a generalization of a classical theorem of Zariski \cite{Za}:
\begin{cor}\label{zar}
Let $X$ be a compact complex manifold and $L$ a holomorphic line bundle. If the base locus of $L$ is a finite set, then $L$ is semiample.
\end{cor}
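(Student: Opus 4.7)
The plan is to deduce the corollary from Theorem \ref{main} in essentially one line: under the hypothesis, the restriction of $L$ to $\mathrm{Bs}(L)$ is automatically ample, so Theorem \ref{main} applies directly and produces the desired $m \ge 1$ with $L^m$ base-point-free.

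First, I would dispose of the trivial case: if $\mathrm{Bs}(L) = \emptyset$, then $L$ itself is already base-point-free, so $L$ is semiample with $m = 1$ and there is nothing to prove.

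Otherwise $\mathrm{Bs}(L)$ is a nonempty finite set of points, regarded as a reduced analytic subvariety (this is the convention the paper explicitly adopts, namely that analytic subvarieties are treated without their possible nonreduced scheme structure). The restriction $L|_{\mathrm{Bs}(L)}$ is then a line bundle on a zero-dimensional reduced analytic space, i.e.\ a choice of one-dimensional vector space over each of finitely many reduced points; any such line bundle is trivial, and in particular ample (equivalently, the Nakai-Moishezon positivity condition is vacuously satisfied because there are no positive-dimensional subvarieties on which to test it). Applying Theorem \ref{main} then yields $m \ge 1$ such that $L^m$ is base-point-free, which is the conclusion.

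I do not anticipate any real obstacle: the corollary is a formal consequence of Theorem \ref{main} together with the tautological fact that any line bundle on a finite set of reduced points is ample. The only minor point worth flagging is the reliance on the reduced convention for analytic subvarieties; without it, one would need to be slightly more careful about what "ample" means for a line bundle on a nonreduced zero-dimensional subspace, but in the present setup this issue does not arise.
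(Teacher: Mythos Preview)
Your proposal is correct and matches the paper's approach: the paper states this as an ``immediate corollary'' of Theorem \ref{main} without further proof, relying on exactly the observation you make that any line bundle on a finite (reduced) set of points is trivially ample.
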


Following the argument of \cite[Proposition 1.1]{ELMNP}, we also obtain (compare with Proposition \ref{isola}):
\begin{cor}
Let $X$ be a compact complex manifold and $D$ a $\mathbb{Q}$-divisor. Then the stable base locus $\mathbb{B}(D)$ does not have isolated points.
\end{cor}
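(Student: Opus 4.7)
The plan is to argue by contradiction using Corollary \ref{zar} (Zariski). Suppose $x$ is an isolated point of $\mathbb{B}(D)$. After passing to a sufficiently divisible positive multiple, we may assume that $L:=\mathcal{O}_X(mD)$ is a genuine line bundle and that $\mathrm{Bs}(L^k)=\mathbb{B}(L)=\mathbb{B}(D)$ for every $k\geq 1$; then $x$ is isolated in $\mathrm{Bs}(L)$ and we may decompose $\mathrm{Bs}(L)=\{x\}\sqcup B'$ as a disjoint union of closed analytic subsets with $x\notin B'$. If $B'=\emptyset$, then $\mathrm{Bs}(L)=\{x\}$ is finite, so Corollary \ref{zar} gives that $L$ is semiample, whence $\mathbb{B}(L)=\emptyset$, contradicting $x\in\mathbb{B}(L)$. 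So we may assume $B'\neq\emptyset$; the task is to ``get rid of'' the component $B'$ and reduce to the case of a finite base locus just handled.

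To separate $x$ from $B'$, I would pass to a modification $\pi\colon \ti{X}\to X$, obtained as a composition of blowups with smooth centers, that principalizes the base ideal $\mathfrak{b}(|L|)$. This gives a decomposition $\pi^{*}L=M+F$, where $|M|$ is base-point-free and $F$ is an effective divisor with $\mathrm{supp}(F)=\pi^{-1}(\mathrm{Bs}(L))$. Because $\{x\}$ and $B'$ are disjoint, so are their preimages, and we may write $F=F_x+F_{B'}$ with $\mathrm{supp}(F_x)\subseteq \pi^{-1}(x)$ and $\mathrm{supp}(F_{B'})\subseteq\pi^{-1}(B')$. The natural object to study is the line bundle $N:=\pi^{*}L\otimes\mathcal{O}_{\ti{X}}(-F_{B'})=M\otimes\mathcal{O}_{\ti{X}}(F_x)$ on $\ti{X}$, whose base locus is contained in $\mathrm{supp}(F_x)\subseteq\pi^{-1}(x)$. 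Sections of $N^{j}$ push forward via $\pi$ to sections of $L^{j}$ on $X$ vanishing along $B'$ with the prescribed multiplicities, and a section not vanishing identically along $F_x$ would descend to a section of $L^{j}$ not vanishing at $x$, producing the desired contradiction with $x\in\mathrm{Bs}(L^j)=\mathbb{B}(D)$.

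The main obstacle is carrying out this last step: the base locus of $N$ sits inside the fiber $\pi^{-1}(x)$, which is typically positive-dimensional, so Corollary \ref{zar} cannot be applied directly. The cleanest remedy is either to further modify $\ti{X}$ by additional blowups over $x$ until the base locus of the analogous twist becomes a finite set (so that Corollary \ref{zar} does apply and yields the required nonvanishing section, which then descends through all the modifications), or alternatively to invoke Theorem \ref{main} (Fujita-Zariski) by verifying that $N$ restricted to its base locus is ample, using that $\pi^{*}L$ is trivial on $\pi^{-1}(x)$ and that the base-point-freeness of $M$ together with the positivity of $F_x$ provides enough positivity. Handling this reduction carefully in the general compact complex setting—where projective tools such as hyperplane sections through $x$ avoiding $B'$ need not be available—is the crux of the argument.
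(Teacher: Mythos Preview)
Your overall strategy is right, and your reduction to Corollary~\ref{zar} is exactly what the paper does. The gap is in the choice of modification: you principalize the \emph{entire} base ideal $\mathfrak{b}(|L|)$, which forces $\pi^{-1}(x)$ to be positive-dimensional and creates the obstacle you then try to work around. Neither of your proposed remedies succeeds as stated. For the Fujita--Zariski route, note that $N=\pi^{*}L-F_{B'}$ restricts trivially to all of $\pi^{-1}(x)$: indeed $\pi^{*}L|_{\pi^{-1}(x)}$ is trivial (it is the pullback of a line bundle from a point) and $F_{B'}$ is disjoint from $\pi^{-1}(x)$. Hence if $\mathrm{Bs}(N)\subset\pi^{-1}(x)$ is positive-dimensional, $N|_{\mathrm{Bs}(N)}$ is trivial, not ample, and Theorem~\ref{main} does not apply. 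For the ``further blowups'' route, you give no mechanism that forces the base locus of the successive twists to shrink to a finite set; a priori each new blowup over $x$ replaces a point by a divisor and the problem repeats.

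The paper's fix is simply not to blow up over $x$ at all. Since $x$ is isolated in $\mathrm{Bs}(L)$, the closed analytic subspace $\mathrm{Bs}(L)\setminus\{x\}$ makes sense, and one takes a resolution $\mu:\ti{X}\to X$ of \emph{that} subspace only. Then $\mu$ is an isomorphism near $x$, so $\mu^{-1}(x)$ is a single point; writing $\mu^{*}L=M+F$ with $F$ effective supported over $\mathrm{Bs}(L)\setminus\{x\}$, one gets $\mathrm{Bs}(M)=\{\mu^{-1}(x)\}$, and Corollary~\ref{zar} applies directly to $M$. A base-point-free power $M^{\ell}$ then yields, after multiplying by the canonical section of $\ell F$ and pushing forward, a section of $L^{\ell}$ not vanishing at $x$, contradicting $x\in\mathbb{B}(L)$. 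This single observation---resolve only away from $x$---is exactly the missing idea.
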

\begin{proof}
Let $x$ be an isolated point of $\mathbb{B}(D)$, and choose $m\geq 1$ large so that $\mathbb{B}(D)={\rm Bs}(mD)$ and $mD$ is the divisor of a line bundle $L$.
Take a resolution $\mu:\ti{X}\to X$ of ${\rm Bs}(L)\backslash \{x\},$ as a complex analytic subspace of $X$, so that $\mu^*L=M+F$ with $F$ effective and with ${\rm Bs}(M)=\{\mu^{-1}(x)\}$.
By Corollary \ref{zar}, $M$ is semiample so $M^\ell$ is base point free for some $\ell\geq 1$, which implies that $x$ is not in the base locus of $\ell mD$, a contradiction.
\end{proof}

\begin{proof}[Proof of Theorem \ref{main}]
Since we are trying to show that $L$ is semiample, up to replacing $L$ with $L^m$ for $m$ large, we may assume that ${\rm Bs}(L)=\mathbb{B}(L)$. Our goal is to show by contradiction that $\mathbb{B}(L)=\emptyset.$ We can clearly assume that $\mathbb{B}(L)\neq X$.

Fix a smooth closed real $(1,1)$ form $\alpha$ on $X$ in the class $c_1(L)$, which is the curvature of a smooth Hermitian metric $h$ on $L$.
Write $\mathbb{B}(L)=\cup_{j=1}^N Z_j$ for the decomposition into irreducible components. For each $j$, since $L|_{Z_j}$ is ample, by \cite[Proposition 3.3 (i)]{DP} there exists an open
neighborhood $U_j$ of $Z_j$ in $X$ and a smooth real-valued function $\vp_j$ such that $\alpha+\ddbar\vp_j>0$ on $U_j$. Then the gluing lemma \cite[Lemme, p.419]{Pa} gives us a smooth function $\vp$ on $U=\cup_{j=1}^N U_j$ (a neighborhood of $\mathbb{B}(L)$ in $X$) with $\alpha+\ddbar\vp>0$ on $U$.

Choose $s_1,\dots,s_\ell\in H^0(X,L),$ such that $\{s_1=\dots=s_\ell=0\}=\mathbb{B}(L)$.
Let
$$\psi=\log\sum_{j=1}^\ell|s_j|^2_{h},$$
so that $\psi$ is smooth on $X\backslash \mathbb{B}(L)$, $\psi$ approaches $-\infty$ on $\mathbb{B}(L)$, and $\alpha+\ddbar\psi\geq 0$ as currents on $X$.
If $\widetilde{\max}$ denotes a regularized maximum (see \cite[I.5.18]{Demb}), then for $A>0$ large enough the function on $U$ given by
$$\widetilde{\max}(\vp-A,\psi),$$
coincides with $\psi$ in a neighborhood of $\de U$ and is smooth everywhere on $U$ (since it agrees with $\vp$ in a neighborhood of $\mathbb{B}(L)$). Therefore if we let $\Psi$ be equal to this function on $U$ and to $\psi$ on $X\backslash U$, then $\Psi$ is smooth on $X$, it satisfies $\alpha+\ddbar\Psi\geq 0$ everywhere and $\alpha+\ddbar\Psi>0$ in a neighborhood of $\mathbb{B}(L)$. In particular, $L$ is Hermitian semipositive (hence $c_1(L)$ is nef), and since $\alpha+\ddbar\Psi>0$ at at least one point of $X$, it follows from Siu \cite{Si2} (see also \cite{Dem87}) that $X$ is Moishezon (hence in class $\mathcal{C}$) and that $L$ is big.

Let then $x$ be any point in $\mathbb{B}(L)$, and $V$ any irreducible positive-dimensional subvariety of $X$ which passes through $x$. Then
\begin{equation}\label{pos}
\left(L^{\dim V}\cdot V\right)=\int_V (\alpha+\ddbar\Psi)^{\dim V}>0,
\end{equation}
and it follows that $x\not\in\mathrm{Null}(L)$
and by
Theorem \ref{ct}, $x\not\in E_{nK}(c_1(L))$.
Therefore there is a K\"ahler current $T$ in the class $c_1(L)$ with analytic singularities which is smooth near $x$. If $\mu:\ti{X}\to X$ is a resolution of the singularities
of $T$, obtained as a composition of blowups with smooth centers, then $\ti{X}$ is projective (see e.g. \cite[Remark 3.6]{DP}) and $\mu$ is an isomorphism near $x$. Thanks to Proposition \ref{pull}, we have $E_{nK}(c_1(\mu^*L))=\mu^{-1}(E_{nK}(c_1(L)))\cup\mathrm{Exc}(\mu),$ and so $\mu^{-1}(x)\not\in E_{nK}(c_1(\mu^*L))$. Since
$\ti{X}$ is projective, by Theorem \ref{bplus} we have
$$E_{nK}(c_1(\mu^*L))=\mathbb{B}_+(\mu^*L)\supset\mathbb{B}(\mu^*L),$$
and so $\mu^{-1}(x)\not\in\mathbb{B}(\mu^*L)$ and hence
$x\not \in \mathbb{B}(L),$ which is a contradiction.

Alternatively, we could have also used the K\"ahler current $T$ directly with the H\"ormander's $L^2$ estimates for $\overline{\partial}$ as in Theorem \ref{bplus} to construct a section of $L^m$ (for some $m\geq 1$) which does not vanish at $x$.
\end{proof}

\subsection{A local ampleness criterion}
The following is a transcendental generalization of a local ampleness criterion of Takayama \cite[Proposition 2.1]{Ta}, with the extra assumption that the class be nef:

\begin{thm}\label{localamp}
Let $X$ be a compact complex manifold and $[\alpha]$ a nef and big $(1,1)$ class. Let $T\geq 0$ be a closed positive current in the class $\alpha$ which is a smooth K\"ahler form on a nonempty open set $U\subset X$. Then we have that $U\cap E_{nK}(\alpha)=\emptyset$.
\end{thm}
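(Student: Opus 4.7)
The plan is to invoke Theorem \ref{ct} and reduce the statement to showing that $U \cap \Null(\alpha) = \emptyset$. Supposing by contradiction that there exists an irreducible positive-dimensional subvariety $V \subset X$ with $V \cap U \neq \emptyset$ and $\int_V \alpha^{\dim V}=0$, I would derive a contradiction by transferring the strict positivity of $T$ on $V \cap U$ into a strict lower bound for this intersection number.

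Since $[\alpha]$ is big, $X$ lies in Fujiki's class $\mathcal{C}$, and hence so does $V$ (as an irreducible subvariety of a class $\mathcal{C}$ manifold). I would choose a modification $\mu: \ti{V} \to V$ with $\ti V$ compact K\"ahler, and fix a K\"ahler form $\omega_{\ti V}$ on $\ti V$. Then $\mu^*[\alpha]$ is nef on $\ti V$ and $\int_{\ti V}(\mu^*\alpha)^{\dim V}=\int_V \alpha^{\dim V}=0$. Because $T$ is smooth on the nonempty open set $V \cap U \subset V$, the pullback $\mu^* T$ is a well-defined closed positive current in the class $\mu^*[\alpha]$ on $\ti V$. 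On the nonempty open set $W \subset \ti V$ where $\mu$ restricts to a biholomorphism onto an open subset of $V^{\mathrm{reg}} \cap U$, the current $\mu^* T$ is a smooth K\"ahler form, so there exist a compact $K \subset W$ and $\delta>0$ with $\mu^* T \geq \delta\, \omega_{\ti V}$ on $K$ and $\int_K \omega_{\ti V}^{\dim V}>0$.

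For any $\ve > 0$, nefness of $\mu^*[\alpha]$ gives that $\mu^*[\alpha] + \ve[\omega_{\ti V}]$ is a K\"ahler class, represented by the K\"ahler current $\mu^* T + \ve \omega_{\ti V}$. The key ingredient I would use is the inequality, coming from the Boucksom--Eyssidieux--Guedj--Zeriahi theory of non-pluripolar products, that bounds the top self-intersection of a K\"ahler class below by the non-pluripolar Monge--Amp\`ere mass of any closed positive current representing it; since $\mu^* T + \ve \omega_{\ti V}$ has smooth potentials on $W$, this non-pluripolar product coincides there with the classical Bedford--Taylor product. One then obtains
\[\int_{\ti V}(\mu^*\alpha + \ve\,\omega_{\ti V})^{\dim V} \geq \int_K (\mu^* T + \ve\,\omega_{\ti V})^{\dim V} \geq \delta^{\dim V}\int_K \omega_{\ti V}^{\dim V} > 0,\]
and passing to the limit $\ve \to 0$ by multilinearity of the cohomological intersection yields $\int_{\ti V}(\mu^*\alpha)^{\dim V}>0$, the desired contradiction.

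The main obstacle is precisely the volume-versus-mass comparison invoked in the last paragraph: one needs to know that the self-intersection of a K\"ahler class dominates the Monge--Amp\`ere mass of any (possibly highly singular) closed positive current in the class. Granting this, the rest is essentially a localization: the strict positivity of $T$ on $V\cap U$ is propagated to $\ti V$ via pullback, and its intersection against itself is computed on the set $K$ where the potentials are smooth, extracting a strictly positive mass contribution to the cohomological self-intersection.
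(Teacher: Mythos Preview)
Your proposal is correct and follows the same route as the paper: reduce to $\Null(\alpha)$ via Theorem~\ref{ct}, pass to a K\"ahler resolution of $V$, and invoke a volume-versus-mass inequality to conclude $\int_V\alpha^{\dim V}>0$ from the strict positivity of $\mu^*T$ on an open set. The only difference is cosmetic: the paper cites Boucksom's result \cite[Theorem~4.1]{Bo} directly (for a nef class, $\int_{\ti V}\beta^k=\mathrm{vol}(\beta)=\sup_{S\geq 0}\int_{\ti V} S_{ac}^k$), which makes your $\ve$-perturbation superfluous, while you reach the same comparison through the BEGZ non-pluripolar mass bound.
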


Such a result (in the projective case) was used to obtain quasi-projectivity criteria in \cite[Theorem 6.1]{LWX} and \cite[Theorem 6]{ST}.

\begin{proof}
Given any $x\in U$ let $V$ be any irreducible subvariety of $X$ which passes through $V$, and say $\dim V=k$.
Let $\mu:\ti{X}\to X$ be an embedded resolution of singularities of $V\subset X$, so that $\mu$ is a composition of blowups with smooth centers, it is an isomorphism at the generic point of $V$, and the proper transform $\ti{V}$ of $V$ is smooth. The class $\mu^*[\alpha]$ is nef and big and satisfies
$$\int_V\alpha^k=\int_{\ti{V}}(\mu^*\alpha)^k,$$
and by a theorem of by Boucksom \cite[Theorem 4.1]{Bo} this equals the volume of the class $[\mu^*\alpha|_{\ti{V}}]$, which is defined as
$$\sup_{S\geq 0}\int_{\ti{V}} S_{ac}^{k},$$
where the supremum is over all closed positive currents $S$ in this class, and $S_{ac}$ denotes the absolutely continuous part of $S$ (see \cite{Bo} for details).
Observe that $\mu^*T$ is a closed positive current in the class $\mu^*[\alpha]$ which is a smooth semipositive $(1,1)$ form on the open set $\mu^{-1}(U)$, and in fact a K\"ahler form on
the open subset $\mu^{-1}(U)\backslash \mathrm{Exc}(\mu).$ Therefore $(\mu^*T)|_{\ti{V}}$ is a K\"ahler form on a nonempty open subset of $\ti{V}$ (where of course it equals its absolutely continuous part), and therefore
$$\int_V\alpha^k\geq \int_{\ti{V}} ((\mu^*T)|_{\ti{V}})_{ac}^k>0.$$
Since $V$ is arbitrary, we conclude that
$x\not\in\Null(\alpha)$, and so it follows from Theorem \ref{ct} that $x\not\in E_{nK}(\alpha)$, as required.
\end{proof}

\subsection{Seshadri constants}
These were introduced by Demailly \cite{Dem92} to measure the local positivity of line bundles. Further properties of these invariants can be found for example in \cite{Sesh, Dem2, Laz}.

Let $[\alpha]$ be a nef $(1,1)$ class on a compact K\"ahler manifold. We define its Seshadri constant at a point $x\in X$ to be
$$\ve(\alpha,x)=\sup\{ \lambda\geq 0\ |\ \mu^*[\alpha]-\lambda [E]\ {\rm is\ nef}\},$$
where $\mu:\ti{X}\to X$ is the blowup of $X$ at $x$, and $E=\mu^{-1}(x)$ is the exceptional divisor. The function $\ve(\cdot, x)$ is continuous on the nef cone.

It is natural to ask what are the points $x\in X$ where $\ve(\alpha,x)$ vanishes. This is answered by the following result, which also contains
the extension of \cite[Proposition 5.1.9]{Laz} in our transcendental situation:
\begin{thm}\label{subvar}
Let $X$ be a compact K\"ahler manifold, $[\alpha]$ a nef $(1,1)$ class and $x\in X$. Then we have
\begin{equation}\label{zero}
\ve(\alpha,x)=0\quad\textrm{if and only if }\quad x\in E_{nK}(\alpha)=\Null(\alpha),
\end{equation}
and furthermore for all $x\in X$ we have
\begin{equation}\label{uno}
\ve(\alpha,x)=\inf_{V\ni x}\left(\frac{\int_{V}\alpha^{\dim V}}{\mathrm{mult}_x V}\right)^{\frac{1}{\dim V}},
\end{equation}
where the infimum runs over all positive-dimensional irreducible analytic subvarieties $V$ containing $x$, and $\mathrm{mult}_x V$ denotes the multiplicity of $V$ at $x$.
The infimum is in fact achieved if $[\alpha]$ is K\"ahler.
\end{thm}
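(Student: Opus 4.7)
The plan is to reduce part (i) to the formula \eqref{uno}, verify the trivial inequality $\ve(\alpha,x)\leq\lambda_0$ (where $\lambda_0$ denotes the infimum on the right-hand side), prove the opposite inequality first under the K\"ahler assumption, and then pass to a general nef class by approximation. Part (i) falls out of the formula, since $\lambda_0=0$ if and only if some positive-dimensional irreducible $V\ni x$ satisfies $\int_V\alpha^{\dim V}=0$, that is, if and only if $x\in\Null(\alpha)$, and $\Null(\alpha)=E_{nK}(\alpha)$ by Theorem~\ref{ct} when $\alpha$ is big (both sides equal $X$ when $\alpha$ is not big). For the easy direction, fix a positive-dimensional irreducible $V\ni x$ of dimension $k$; nefness of $\mu^*[\alpha]-\lambda[E]$ forces $\int_{\tilde V}(\mu^*\alpha-\lambda[E])^k\geq 0$, and the vanishing of mixed terms $\int_{\tilde V}(\mu^*\alpha)^{k-j}\cdot[E]^j=0$ for $0<j<k$ (since $\mu^*\alpha|_E=0$) together with $[E]^k\cdot\tilde V=(-1)^{k-1}\mathrm{mult}_xV$ collapses this to $\int_V\alpha^k\geq\lambda^k\mathrm{mult}_xV$; taking $k$th roots and the infimum over $V$ yields $\ve(\alpha,x)\leq\lambda_0$.

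To establish $\ve(\alpha,x)\geq\lambda_0$ when $[\alpha]$ is K\"ahler I argue by contradiction. Set $\lambda_*:=\ve(\alpha,x)$ and suppose $\lambda_*<\lambda_0$; note $\lambda_*>0$, since a K\"ahler class has strictly positive Seshadri constant via the standard construction that $\mu^*\omega-c\eta_E$ is K\"ahler for small $c>0$ (cf.\ \eqref{kc}). The class $\beta:=\mu^*[\alpha]-\lambda_*[E]$ is nef (the nef cone is closed) and has positive top self-intersection $\int_{\tilde X}\beta^n=\int_X\alpha^n-\lambda_*^n>0$, because $\lambda_*<\lambda_0\leq(\int_X\alpha^n)^{1/n}$ (test $V=X$ in the definition of $\lambda_0$); since $\tilde X$ is K\"ahler, Theorem~\ref{dpmass} gives that $\beta$ is big. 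Because $\lambda_*$ is the supremum of the nef range, $\beta$ sits on the boundary of the nef cone, so $\beta$ is not K\"ahler and $E_{nK}(\beta)\neq\emptyset$. Theorem~\ref{ct} then furnishes a positive-dimensional irreducible $W\subset\tilde X$ with $\int_W\beta^{\dim W}=0$, and since $\beta|_E=\lambda_*H$ where $H$ is the hyperplane class on $E\cong\mathbb{P}^{n-1}$, each geometric type of $W$ leads to a contradiction: if $W\subset E$ the integral equals $\lambda_*^{\dim W}\int_WH^{\dim W}>0$; if $W=\tilde V$ with $V\subset X$ positive-dimensional and $x\notin V$, the integral collapses to $\int_V\alpha^{\dim V}$, which is positive by K\"ahlerness of $\alpha$; and if $W=\tilde V$ with $x\in V$, the vanishing rearranges to $\lambda_*^{\dim V}\mathrm{mult}_xV=\int_V\alpha^{\dim V}$, forcing $\lambda_*\geq\lambda_0$. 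The delicate case, and the reason to isolate the K\"ahler hypothesis at this stage, is the middle one: for a nef but non-K\"ahler $[\alpha]$ there can exist $V\subset\Null(\alpha)$ disjoint from $x$, whose proper transform $\tilde V$ lies in $\Null(\mu^*[\alpha]-\lambda[E])$ for every $\lambda\geq 0$ and would prevent a direct contradiction.

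The general nef case follows by approximation. Fix a K\"ahler form $\omega_X$ on $X$ and put $\alpha_\delta:=[\alpha]+\delta[\omega_X]$, which is K\"ahler for $\delta>0$. The K\"ahler case just established gives $\ve(\alpha_\delta,x)=\lambda_0(\alpha_\delta,x)$; as $\delta\downarrow 0$, the left-hand side tends to $\ve(\alpha,x)$ by the continuity of Seshadri constants on the nef cone recorded in the preceding discussion, while monotonicity of the mixed intersections yields $\lambda_0(\alpha_\delta,x)\geq\lambda_0(\alpha,x)$, and for any fixed $V\ni x$ one has $\lambda_0(\alpha_\delta,x)\leq(\int_V\alpha_\delta^{\dim V}/\mathrm{mult}_xV)^{1/\dim V}\to(\int_V\alpha^{\dim V}/\mathrm{mult}_xV)^{1/\dim V}$; taking an infimum over such $V$ shows $\lim_{\delta\downarrow 0}\lambda_0(\alpha_\delta,x)=\lambda_0(\alpha,x)$, and matching the two limits gives $\ve(\alpha,x)=\lambda_0(\alpha,x)$. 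The assertion that the infimum is achieved when $[\alpha]$ is K\"ahler is obtained by running the same case analysis at $\lambda_*=\lambda_0$: either $\int_X\alpha^n=\lambda_0^n$, in which case $V=X$ already realizes the infimum, or $\beta=\mu^*[\alpha]-\lambda_0[E]$ is big and the boundary argument once again produces some $W=\tilde V$ with $x\in V$, and the equality case now reads $\lambda_0^{\dim V}\mathrm{mult}_xV=\int_V\alpha^{\dim V}$, exhibiting $V$ as a subvariety attaining the infimum.
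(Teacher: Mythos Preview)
Your argument for \eqref{uno} and for the ``infimum achieved when $[\alpha]$ is K\"ahler'' part is essentially the paper's: the easy inequality $\ve(\alpha,x)\leq\lambda_0$ via the expansion on the proper transform, then on the blowup place $\beta=\mu^*[\alpha]-\ve(\alpha,x)[E]$ on the boundary of the K\"ahler cone and invoke the Demailly--P\u{a}un/Null-locus machinery (the paper cites Theorem~\ref{dp} directly, you go through Theorem~\ref{dpmass} plus Theorem~\ref{ct}, which is equivalent) to locate a subvariety $W$ with $\int_W\beta^{\dim W}=0$; the three-case analysis and the passage to general nef by $\alpha_\delta=\alpha+\delta\omega_X$ are the same. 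Your contradiction formulation versus the paper's direct one is cosmetic.

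There is, however, a genuine gap in your derivation of \eqref{zero}. You assert that ``$\lambda_0=0$ if and only if some positive-dimensional irreducible $V\ni x$ satisfies $\int_V\alpha^{\dim V}=0$'', but only the ``if'' direction is obvious. An infimum over an infinite family of subvarieties can be zero without being attained: the multiplicities $\mathrm{mult}_xV$ are unbounded and the dimensions vary, so nothing you have written rules out a sequence $V_i\ni x$ with $\big(\int_{V_i}\alpha^{k_i}/\mathrm{mult}_xV_i\big)^{1/k_i}\to 0$ while every individual integral stays positive. Your approximation step does not help here either: it gives achievers $V_\delta$ for each $\alpha_\delta$, but no compactness argument forces these to converge to a single $V$ with $\int_V\alpha^{\dim V}=0$.

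The paper avoids this issue by \emph{not} deriving \eqref{zero} from \eqref{uno}. Instead it proves directly that $x\notin E_{nK}(\alpha)$ implies $\ve(\alpha,x)>0$: take a K\"ahler current $T=\alpha+\ddbar\psi\geq\delta\omega$ with analytic singularities smooth near $x$, glue (via regularized maximum) the nef potentials $\rho_\ve$ with $\psi$ to get $\ti\rho_\ve$ agreeing with $\psi$ near $x$, and check that $\mu^*\alpha-\delta\gamma\eta+\ddbar(\mu^*\ti\rho_\ve)$ is positive on a neighborhood of $E$ and $\geq -C\ve\ti\omega$ elsewhere, so $\mu^*[\alpha]-\delta\gamma[E]$ is nef and $\ve(\alpha,x)\geq\delta\gamma>0$. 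You should either insert this construction, or supply a separate uniform lower bound of the form $\big(\int_V\alpha^{\dim V}/\mathrm{mult}_xV\big)^{1/\dim V}\geq c>0$ for all $V\ni x$ when $x\notin E_{nK}(\alpha)$; the latter is possible but is essentially the same work.
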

\begin{proof}
First we show that for all $x\in X$ we have
\begin{equation}\label{part}
\ve(\alpha,x)\leq \inf_{V\ni x}\left(\frac{\int_{V}\alpha^{\dim V}}{\mathrm{mult}_x V}\right)^{\frac{1}{\dim V}}.
\end{equation}
To this end, let $V$ be any positive-dimensional irreducible subvariety through $x$, let $\mu:\ti{X}\to X$ be the blowup of $x$, let $\ti{V}$ be the proper transform of $V$ through $\mu$, and let $k=\dim V>0$. If $\lambda\geq 0$ is such that $\mu^*[\alpha]-\lambda[E]$ is nef then
$$0\leq \int_{\ti{V}}(\mu^*\alpha-\lambda[E])^{k}=\int_V\alpha^k+(-1)^k\lambda^k\int_{\ti{V}}[E]^k=\int_V\alpha^k-\lambda^k\mathrm{mult}_x V,$$
using the fact that $\mathrm{mult}_x V=-(-1)^k\int_{\ti{V}}[E]^k$ thanks to \cite[Lemma 5.1.10]{Laz}.
This proves \eqref{part}, which also clearly shows that if $x\in \Null(\alpha)$ then $\ve(\alpha,x)=0$.

Conversely if $x\not\in \Null(\alpha)=E_{nK}(\alpha)$ (using Theorem \ref{ct}), then there is a K\"ahler current $T\in[\alpha]$ with analytic singularities which is smooth near $x$, and satisfies $T=\alpha+\ddbar\psi\geq\delta\omega$ on $X$ for some $\delta>0$. Also, since $[\alpha]$ is nef, for every $\ve>0$ there is a smooth function $\rho_\ve$ such that
$\alpha+\ddbar\rho_\ve\geq -\ve\omega$ on $X$. Now since $[E]|_{E}\cong\mathcal{O}_{\mathbb{CP}^{n-1}}(-1)$, there is a smooth closed real $(1,1)$ form $\eta$ on $\ti{X}$ which is cohomologous to $[E]$, is supported on a neighborhood $U$ of $E$, and such that $\mu^*\omega-\gamma\eta$ is a K\"ahler form on $\ti{X}$ for some small $\gamma>0$ (see e.g. \cite[Lemma 3.5]{DP}).
For every $\ve>0$ choose a large constant $C_\ve>0$ such that
$$\ti{\rho}_\ve:=\widetilde{\max}(\rho_\ve-C_\ve,\psi),$$
is a smooth function on $X$ which agrees with $\psi$ in a neighborhood of $x$ which contains $\mu(U)$. We have $\alpha+\ddbar\ti{\rho}_\ve\geq -\ve\omega$ on $X$ and $\alpha+\ddbar\ti{\rho}_\ve\geq \delta\omega$ on $\mu(U)$. Therefore on $U$ we have
$$\mu^*\alpha-\delta\gamma\eta+\ddbar(\mu^*\ti{\rho}_\ve)>0,$$
while on $\ti{X}\backslash U$ we have
$$\mu^*\alpha-\delta\gamma\eta+\ddbar(\mu^*\ti{\rho}_\ve)=\mu^*\alpha+\ddbar(\mu^*\ti{\rho}_\ve)\geq-\ve\mu^*\omega\geq -C\ve\ti{\omega},$$
where $\ti{\omega}$ is a fixed K\"ahler form on $\ti{X}$, with $\mu^*\omega\leq C\ti{\omega}$.
This shows that $\mu^*[\alpha]-\delta\gamma[E]$ is nef, and so $\ve(\alpha,x)\geq \gamma\delta>0$. We obtain that \eqref{zero} holds, and therefore also \eqref{uno} in the case when $x\in \Null(\alpha)$.

It remains to show that \eqref{uno} holds. First, we assume that $[\alpha]$ is K\"ahler, which implies that $\ve(\alpha,x)>0$.
By definition we have that $\mu^*[\alpha]-\ve(\alpha,x)[E]$ is nef but not K\"ahler (because the K\"ahler cone is open), so by Theorem \ref{dp} there is a positive-dimensional irreducible analytic subvariety $\ti{V}\subset \ti{X}$ with
\begin{equation}\label{zzero}
0=\int_{\ti{V}}(\mu^*\alpha-\ve(\alpha,x)[E])^{k},
\end{equation}
where $k=\dim \ti{V}$.
If $\ti{V}$ is disjoint from $E$ then $V=\mu(\ti{V})$ would be an irreducible $k$-dimensional subvariety of $X$ and we would have
$$0=\int_{\ti{V}}(\mu^*\alpha)^k=\int_V\alpha^k,$$
a contradiction since $[\alpha]$ is K\"ahler.

If $\ti{V}\subset E$ then $-[E]|_{\ti{V}}\cong \mathcal{O}_{\mathbb{CP}^{n-1}}(1)|_{\ti{V}}$ which is ample, and so we would have
$$0=\ve(\alpha,x)^k\int_{\ti{V}}(-[E])^{k}>0,$$
a contradiction.

Therefore we must have that $\ti{V}$ intersects $E$ but is not contained in it, and so $V=\mu(V)$ is an irreducible $k$-dimensional subvariety of $X$ which passes through $x$ and \eqref{zzero} gives
$$\ve(\alpha,x)=\left(\frac{\int_{V}\alpha^{k}}{\mathrm{mult}_x V}\right)^{\frac{1}{k}},$$
thus showing \eqref{uno}, with the infimum being in fact a minimum.

Lastly, if $[\alpha]$ is just nef then we have
\[\begin{split}
\ve(\alpha,x)&=\lim_{\ve\downarrow 0}\ve(\alpha+\ve\omega,x)=\lim_{\ve\downarrow 0}\min_{V\ni x}\left(\frac{\int_{V}(\alpha+\ve\omega)^{\dim V}}{\mathrm{mult}_x V}\right)^{\frac{1}{\dim V}}\\
&\geq \inf_{V\ni x}\left(\frac{\int_{V}\alpha^{\dim V}}{\mathrm{mult}_x V}\right)^{\frac{1}{\dim V}},\end{split}\]
which combined with \eqref{part} proves \eqref{uno}.
\end{proof}

\subsection{The K\"ahler-Ricci flow}

We briefly discuss an application of Theorem \ref{ct} to the study of the K\"ahler-Ricci flow (see for example \cite{To2} for a detailed exposition). Let $(X,\omega_0)$ be a compact K\"ahler manifold, and let $\omega(t)$ be a family of K\"ahler forms on $X$ which solve the K\"ahler-Ricci flow equation
\begin{equation}\label{krf}
\frac{\de}{\de t}\omega(t)=-\Ric(\omega(t)),\quad \omega(0)=\omega_0,
\end{equation}
for $t\in [0,T),$ where $0<T<\infty$ is the maximal existence time, which we assume is finite. Here $\Ric(\omega(t))$ is the Ricci curvature form of $\omega(t)$ which equals $R_{h(t)}$ where $h(t)$ is the metric on $K_X^*$ induced by $\det g(t)$. It is known that the maximal existence time $T$ is finite if and only if $c_1(K_X)$ is not nef \cite{TZ,Ts}. At time $T$ a finite-time singularity forms, and the metrics $\omega(t)$ cannot converge everywhere smoothly to a limiting K\"ahler metric on $X$. The cohomology classes $[\omega(t)]$ however do converge to the limiting class $[\alpha]=[\omega_0]+Tc_1(K_X)$, which is nef but not K\"ahler.

Define the singularity formation set $\Sigma$ of this flow to be equal to the complement of
$$\{x\in X\ |\ \exists U\ni x \textrm{ open, }\exists \omega_T \textrm{ K\"ahler metric on }U \textrm{ s.t. } \omega(t)\overset{C^\infty(U)}{\to}\omega_T \textrm{ as }t\to T^{-}\}.$$
A conjecture of Feldman-Ilmanen-Knopf \cite{FIK} states that $\Sigma$ should be an analytic subvariety of $X$. This was proved in \cite{CT}, and more precisely we have
\begin{thm}[Collins-T. \cite{CT}]\label{krr}
For any finite-time singularity of the K\"ahler-Ricci flow we have that
$$\Sigma=\Null(\alpha).$$
\end{thm}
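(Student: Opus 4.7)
I would prove the two inclusions $\Null(\alpha)\subseteq\Sigma$ and $\Sigma\subseteq\Null(\alpha)$ separately. The first is essentially elementary, relying only on smoothness of the flow on $X\setminus\Sigma$ together with continuity of cohomological intersection numbers. The second is the deep part of the statement and depends crucially on Theorem~\ref{ct}, combined with uniform parabolic estimates for $\omega(t)$ near points outside $\Null(\alpha)$.

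For $\Null(\alpha)\subseteq\Sigma$, I argue by contrapositive. Suppose $x\notin\Sigma$, so $\omega(t)\to\omega_T$ smoothly on some open neighborhood $U\ni x$, with $\omega_T$ a K\"ahler metric on $U$. Let $V$ be any positive-dimensional irreducible subvariety containing $x$, put $k=\dim V$, and let $\mu:\ti{V}\to V$ be a resolution of singularities. Since $[\omega(t)]=[\alpha]+(t-T)c_1(K_X)$ depends linearly on $t$, continuity of cohomological intersection numbers gives
$$\int_V\omega(t)^k\longrightarrow\int_V\alpha^k\quad\text{as }t\to T^-,$$
where each integral over $V$ is interpreted as the corresponding integral over $\ti{V}$ of the pullback. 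Since $\omega(t)$ is K\"ahler on $X$ for $t<T$, $\mu^*\omega(t)$ is pointwise nonnegative on $\ti{V}$, so for any $U'\Subset U$,
$$\int_V\omega(t)^k\geq \int_{\mu^{-1}(U')}(\mu^*\omega(t))^k\longrightarrow \int_{\mu^{-1}(U')}(\mu^*\omega_T)^k>0,$$
the strict positivity holding because $\omega_T$ is K\"ahler on $U'$ and $\mu$ is generically an isomorphism onto $V$. Hence $\int_V\alpha^k>0$, so $x\notin\Null(\alpha)$.

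For $\Sigma\subseteq\Null(\alpha)$, again by contrapositive, assume $x\notin\Null(\alpha)$. If $[\alpha]$ were not big, then $\int_X\alpha^n=0$ would force $X\in\Null(\alpha)$, a contradiction; so $[\alpha]$ is nef and big and Theorem~\ref{ct} yields $x\notin E_{nK}(\alpha)$. By Theorem~\ref{bouck} I choose a K\"ahler current $S=\alpha+\ddbar\psi\in[\alpha]$ with analytic singularities along $E_{nK}(\alpha)$, so that $\psi$ is smooth on some neighborhood $W\ni x$ and $S\geq\ve\omega_0$ globally on $X$ for some $\ve>0$. Rewriting \eqref{krf} as a parabolic complex Monge-Amp\`ere equation $\de_t\vp=\log\frac{(\hat\omega_t+\ddbar\vp)^n}{\Omega}$ for a smooth reference family $\hat\omega_t\in[\omega(t)]$ and smooth volume form $\Omega$, I would use $\psi$ as a global singular potential barrier to obtain, on a smaller neighborhood $W'\Subset W$: a uniform $C^0$ bound on $\vp(t)$; a uniform Laplacian estimate $C^{-1}\omega_0\leq\omega(t)\leq C\omega_0$ on $W'$ via a maximum-principle computation on a quantity of the form $\log\mathrm{tr}_{\omega(t)}\omega_0-A(\vp(t)-\psi)$; and, as a consequence, interior parabolic Evans-Krylov and Schauder estimates giving uniform $C^\infty_{\mathrm{loc}}$ bounds on $\omega(t)$ up to $t=T$. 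Smooth subsequential convergence plus uniqueness of the parabolic limit then deliver $\omega(t)\to\omega_T$ smoothly on $W''\Subset W'$, so $x\notin\Sigma$.

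The main obstacle is the Laplacian estimate in the second inclusion: $S$ is smooth only off $E_{nK}(\alpha)$, so it cannot be used as a global smooth reference in a naive maximum-principle argument. The trick is that the auxiliary term $-A(\vp-\psi)$ blows up to $+\infty$ exactly along $E_{nK}(\alpha)$, forcing the maximum of the barrier quantity to be attained in the smooth region $W'$, while the strict positivity $S\geq\ve\omega_0$ supplies the slack needed to absorb the troublesome curvature terms for $A$ large. This is the general strategy developed in the work of Phong-Sesum-Sturm, Song-Tian and Tian-Zhang on the K\"ahler-Ricci flow near its singular time.
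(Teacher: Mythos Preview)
Your outline is correct and matches the paper's own description of the argument (the paper does not give a detailed proof here, only recording that Theorem~\ref{ct} supplies a K\"ahler current with analytic singularities along $\Null(\alpha)$, whose potential serves as a barrier to force uniform estimates and hence smooth convergence of $\omega(t)$ on compact subsets of $X\setminus\Null(\alpha)$). One small correction: the auxiliary term $-A(\vp-\psi)$ tends to $-\infty$ (not $+\infty$) along $E_{nK}(\alpha)$, since $\psi\to-\infty$ there while $\vp$ stays bounded above; this still forces the maximum of your barrier quantity into the region where $\psi$ is smooth (though not necessarily into $W'$ itself), and the resulting global bound then yields the desired local control on $W'$ because $\psi$ is bounded there.
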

In other words, $\Sigma$ equals the union of all analytic subvarieties whose volume shrinks to zero as $t$ approaches $T$.
In particular $\Sigma=X$ happens if and only if $\int_X\alpha^n=0$. In this case, we expect that $X$ admits a Fano fibration onto a lower-dimensional normal compact K\"ahler space $Y$, and this was proved recently by Zhang and the author \cite{ToZ} when $n\leq 3$.

Theorem \ref{ct} is used crucially in the proof of Theorem \ref{krr} to produce suitable barrier functions (with analytic singularities along $\Null(\alpha)$), which are used to prove that the metrics $\omega(t)$ have a smooth limit on every compact set in $X\backslash\Null(\alpha)$.

The case when the maximal existence time $T$ is infinite is quite different, see e.g. \cite{TWY, TZ2} and references therein.

\subsection{Degenerations of Calabi-Yau metrics}
Let now $X$ be a compact K\"ahler manifold with $c_1(K_X)=0$ in $H^{1,1}(X,\mathbb{R})$ (equivalently, $K_X$ is torsion in $\mathrm{Pic}(X)$). We will call such a manifold Calabi-Yau. By Yau's Theorem \cite{Y}, every K\"ahler class on $X$ contains a unique representative which is a Ricci-flat K\"ahler metric, i.e. a K\"ahler form $\omega$ with $\Ric(\omega)\equiv 0$.

Let now $[\alpha]$ be nef and big, and let $[\alpha_t]$ be a path of K\"ahler classes ($0<t\leq 1$) which converge to $[\alpha]$ as $t\to 0$. By Yau's Theorem, for every $t>0$ there is a unique Ricci-flat K\"ahler form $\omega_t$ in the class $[\alpha_t]$, and the question we would like to address is what is the behavior of these metrics as $t\to 0$.

\begin{thm}\label{cy}
In this setup there is an incomplete Ricci-flat K\"ahler metric $\omega_0$ on $X\backslash\Null(\alpha)$ (which depends only on $[\alpha]$ and not on the path $[\alpha_t]$), such that
$$\omega_t\to\omega_0 \quad \textrm{ as }\quad t\to 0,$$
smoothly on compact subsets of $X\backslash\Null(\alpha)$. Furthermore, $(X,\omega_t)$ converge in the Gromov-Hausdorff topology (i.e. as metric spaces) to the metric completion of $(X\backslash\Null(\alpha), \omega_0).$
\end{thm}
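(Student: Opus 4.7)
My plan is to reduce the theorem to a degenerating family of complex Monge--Amp\`ere equations and to exploit Theorem \ref{ct} to produce a barrier with analytic singularities supported exactly on $\Null(\alpha)$. Since $K_X$ is torsion, there is a smooth volume form $\Omega$ on $X$ with $\Ric(\Omega)\equiv 0$, and, choosing smooth reference forms $\alpha_t\in[\alpha_t]$ converging smoothly to a fixed representative $\alpha\in[\alpha]$, Yau's theorem writes the Ricci-flat metrics as $\omega_t=\alpha_t+\ddbar\vp_t$, where $\vp_t$ solves
$$(\alpha_t+\ddbar\vp_t)^n=c_t\Omega,\qquad \sup_X\vp_t=0,$$
with $c_t\to c_0=\int_X\alpha^n/\int_X\Omega>0$ by bigness of $[\alpha]$. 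Ko\l odziej-type pluripotential estimates adapted to the degenerating setting (Eyssidieux--Guedj--Zeriahi) then yield a uniform bound $\|\vp_t\|_{L^\infty(X)}\leq C$.

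The key input is Theorem \ref{ct} combined with Theorem \ref{bouck}, which produces a K\"ahler current $T=\alpha+\ddbar\psi\geq\delta\omega$ in $[\alpha]$ with analytic singularities precisely along $\Null(\alpha)$, so that $\psi\in C^\infty(X\setminus\Null(\alpha))$ and $\psi\equiv-\infty$ exactly on $\Null(\alpha)$. I would then run a Tsuji--type second-order estimate: applying the Chern--Lu inequality to a regularization of $\log\tr{\omega_t}{\omega}-A\vp_t+B\psi$ and passing to the limit, the maximum principle yields
$$\tr{\omega_t}{\omega}\leq C e^{-B\psi}\quad\text{on }X,$$
so $\omega_t$ is uniformly comparable to $\omega$ on every compact $K\subset X\setminus\Null(\alpha)$. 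The Monge--Amp\`ere equation becomes uniformly elliptic on such compacts, and Evans--Krylov together with Schauder bootstrapping furnish uniform $C^\infty_{\rm loc}(X\setminus\Null(\alpha))$ estimates on $\vp_t$.

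A diagonal argument extracts a limit $\vp_0\in C^\infty(X\setminus\Null(\alpha))$, and $\omega_0:=\alpha+\ddbar\vp_0$ is a smooth Ricci-flat K\"ahler metric on $X\setminus\Null(\alpha)$ satisfying $\omega_0^n=c_0\Omega$ there. Uniqueness of $\vp_0$ among globally $\alpha$-psh $L^\infty$ functions (extended across $\Null(\alpha)$ by upper semicontinuous regularization) follows from the comparison principle of Boucksom--Eyssidieux--Guedj--Zeriahi for big classes. This yields both independence of $\omega_0$ from the approximating path $[\alpha_t]$ and convergence of the full family, not merely of subsequences.

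The main obstacle will be the Gromov--Hausdorff statement. I would first establish a uniform diameter bound for $(X,\omega_t)$, available in this non-collapsing degenerating setting either through Perelman-type volume arguments combined with the $L^\infty$ control on $\vp_t$ or via more recent Guo--Phong--Song--Sturm techniques. Given smooth convergence on compacts of $X\setminus\Null(\alpha)$, identifying the GH limit with the metric completion of $(X\setminus\Null(\alpha),\omega_0)$ amounts to constructing explicit $\ve$-approximations: on the smooth region the identity suffices, while near $\Null(\alpha)$ one must show that tubular neighborhoods have $\omega_t$-volume and $\omega_t$-extrinsic-diameter shrinking to zero uniformly as both $t\to 0$ and the tube shrinks. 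This last quantitative decay is again controlled by the barrier $\psi$ from Theorem \ref{ct}, which forces the Monge--Amp\`ere mass $\omega_t^n$ to concentrate away from $\Null(\alpha)$ and prevents Cauchy sequences in $\omega_t$ from producing spurious GH limit points.
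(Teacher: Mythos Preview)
Your proposal is correct and follows essentially the same strategy that the paper indicates (and that is carried out in the cited references \cite{CT,To3,RZ,EGZ,BEGZ}): the paper does not give a detailed proof of Theorem~\ref{cy} but only remarks that Theorem~\ref{ct} is used to construct barrier functions and prove estimates on $\omega_t$, which is exactly what you do by producing $\psi$ with analytic singularities along $\Null(\alpha)$ and inserting it into a second-order (Chern--Lu/Aubin--Yau) estimate. Your handling of the $L^\infty$ bound via \cite{EGZ,BEGZ}, uniqueness via the comparison principle in big classes, and the outline for the Gromov--Hausdorff statement (uniform diameter plus smallness of tubes around $\Null(\alpha)$) all match the approach of \cite{CT} and \cite{RZ}.
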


Again Theorem \ref{ct} is used to construct barrier functions and prove estimates on the metrics $\omega_t$. Theorem \ref{cy} was proved in \cite{CT} building upon earlier work in \cite{To3,RZ,EGZ,BEGZ}. When $[\alpha]$ is a rational class, it follows that $X$ is projective and the base-point-free theorem gives a birational morphism $f:X\to Y$ onto a singular Calabi-Yau variety with at worst canonical singularities, with $\mathrm{Exc}(f)=\Null(\alpha)$, and then $\omega_0$ can be thought of as the pullback of a singular Ricci-flat metric on $Y$ as constructed in \cite{EGZ}. In this case it follows from \cite{So} (see also \cite{DS}) that the Gromov-Hausdorff limit as above is in fact homeomorphic to $Y$.

The case when $[\alpha]$ is nef but not big has also been widely studied, and in this case (assuming that $[\alpha]$ is the pullback of a K\"ahler class from the base of a fiber space) the Ricci-flat metrics collapse in the limit to a lower-dimensional space, see \cite{GW,To4, GTZ,GTZ2,HT,TWY}.

\section{Ideas from the proof}\label{sectproof}

In this section we describe the proof of Theorem \ref{ct}. The easy part is showing that
$$\Null(\alpha)\subset E_{nK}(\alpha).$$
This was observed already in \cite[Theorem 2.2]{CZ}, which we roughly follow here.
If this was not the case, we could find a point $x\in \Null(\alpha)$ which is not in $E_{nK}(\alpha)$, and so there is a K\"ahler current $T=\alpha+\ddbar\psi\geq \delta\omega$ on $X$, ($\delta>0$) with analytic singularities, and which is smooth near $x$. Let $V$ be any positive-dimensional irreducible analytic subvariety of $X$ which passes through $x$, and let $k=\dim V$. As explained in subsection \ref{nakas}, since $X$ is in class $\mathcal{C}$, so is $V$, and so we can take a resolution $\mu:\ti{V}\to V$ obtained as a composition of blowups with smooth centers, where $\ti{V}$ is a compact K\"ahler manifold. Furthermore, regarding the first center of blowups as a submanifold of $X$, blowing it up inside $X$ and repeating, we obtain an extension of the map $\mu$ to $\mu:\ti{X}\to X$ where $\ti{X}$ is a compact complex manifold which contains $\ti{V}$ as a submanifold.
From this it is clear that $\mu^*\alpha|_{\ti{V}}$ defines a smooth closed real $(1,1)$ form on $\ti{V}$, whose class $[\mu^*\alpha|_{\ti{V}}]$ is nef, and so
for every $\ve>0$ we can find a smooth function $\vp_\ve$ on $\ti{V}$ such that $\mu^*\alpha|_{\ti{V}}+\ddbar\vp_\ve\geq -\ve\ti{\omega}$ on $\ti{V}$, where $\ti{\omega}$ is a K\"ahler form on $\ti{V}$. Also, the current $\mu^*T$ can be restricted to $\ti{V}$, and exactly as in \eqref{kc} we obtain a quasi-psh function $\psi'$ on $\ti{X}$ (not identically $-\infty$ on $\ti{V}$) such that $T':=\mu^*T+\ddbar\psi'$ is a K\"ahler current on $\ti{X}$, and it satisfies $T'|_{\ti{V}}\geq\delta'\ti{\omega}$ for some $\delta'>0$.
Then we have
\[\begin{split}\int_{\ti{V}}(\mu^*\alpha+\ve\ti{\omega})^k&=\int_{\ti{V}}(\mu^*\alpha+\ve\ti{\omega})\wedge(\mu^*\alpha+\ve\ti{\omega}+\ddbar\vp_\ve)^{k-1}\\
&=\int_{\ti{V}}(T'+\ve\ti{\omega}-\ddbar(\mu^*\psi+\psi'))\wedge(\mu^*\alpha+\ve\ti{\omega}+\ddbar\vp_\ve)^{k-1}\\
&=\int_{\ti{V}}(T'+\ve\ti{\omega})\wedge(\mu^*\alpha+\ve\ti{\omega}+\ddbar\vp_\ve)^{k-1}\\
&\geq \delta'\int_{\ti{V}}\ti{\omega}\wedge(\mu^*\alpha+\ve\ti{\omega}+\ddbar\vp_\ve)^{k-1},
\end{split}\]
using that
$$\int_{\ti{V}}\ddbar(\mu^*\psi+\psi')\wedge\gamma=0,$$
for any closed smooth real $(n-1,n-1)$ form $\gamma$. Iterating this argument, we obtain
$$\int_{\ti{V}}(\mu^*\alpha+\ve\ti{\omega})^k\geq \delta'^{k}\int_{\ti{V}}\ti{\omega}^k,$$
and letting $\ve\to 0$ we finally get
$$\int_V\alpha^k=\int_{\ti{V}}(\mu^*\alpha)^k=\lim_{\ve\to 0}\int_{\ti{V}}(\mu^*\alpha+\ve\ti{\omega})^k\geq \delta'^{k}\int_{\ti{V}}\ti{\omega}^k>0,$$
and since $V$ was arbitrary this shows that $x\not\in \Null(\alpha)$, a contradiction.

The reverse inclusion
$$E_{nK}(\alpha)\subset\Null(\alpha),$$
is much harder to prove, and we will give an outline of the argument, referring to \cite{CT} for full details.

We again argue by contradiction, so suppose we had a point $x\in E_{nK}(\alpha)$ which is not in $\Null(\alpha)$. Let $V$ be an irreducible component of $E_{nK}(\alpha)$ that passes through $x$. Thanks to Proposition \ref{isola}, $V$ is a positive-dimensional irreducible analytic subvariety, say $\dim V=k$. Using an embedded resolution of singularities, it is not hard to see \cite[p.1180]{CT} that we can assume without loss of generality that $V$ is smooth, so it is a compact complex manifold, in class $\mathcal{C}$. The class $[\alpha|_V]$ is nef, and since $x\not\in\Null(\alpha)$ it satisfies
$$\int_V\alpha^{k}>0.$$
Therefore $[\alpha|_V]$ is big by Theorem \ref{dpmass}, and so it contains a K\"ahler current $T=\alpha|_V+\ddbar\vp\geq \delta \omega|_V$, $\delta>0$, where we may assume that the function $\vp$ on $V$ has analytic singularities defined by a coherent ideal sheaf $\mathcal{I}$ on $V$. By Theorem \ref{bouck} we also have a K\"ahler current $K=\alpha+\ddbar\psi$ on $X$, with analytic singularities along $E_{nK}(\alpha)$, so in particular $\psi|_{V}\equiv-\infty$.

The goal now is to use $T$, together with $K$, to produce a global K\"ahler current $\ti{T}=\alpha+\ddbar\ti{\vp}$ on $X$ in the class $[\alpha]$ such that $\ti{\vp}|_{V}$ is smooth on a Zariski open subset of $V$. This is exactly the content of \cite[Theorem 3.2]{CT}. Once we achieve this, it follows that applying Demailly's regularization \cite{Dem} to $\ti{T}$ produces a K\"ahler current with analytic singularities which is smooth at the generic point of $V$, contradicting the fact that $V$ is a component of $E_{nK}(\alpha).$

For the sake of clarity, let us first see how to construct $\ti{T}$ in the case when $n=2$ and $E_{nK}(\alpha)=V$. In this case $V$ is a compact Riemann surface, and $[\alpha|_V]$ is nef and big, which in fact implies that $[\alpha|_V]$ is a K\"ahler class. Indeed $E_{nK}(\alpha|_V)$ is an analytic subvariety of $V$, hence a finite set of points, but we know from Proposition \ref{isola} that there cannot be any such points. We then choose $\omega$ a K\"ahler form on $V$ in the class $[\alpha|_V]$, which will now play the role of the K\"ahler current $T$ above. Since $V$ is a smooth submanifold of $X$ it is elementary to find an extension of $\omega$, still denoted by $\omega$, to a K\"ahler form on a neighborhood $U$ of $V$ in $X$, which is of the form $\omega=\alpha+\ddbar\rho$ on $U$, for a smooth function $\rho$. Since the global function $\psi$ equals $-\infty$ on $V$ and is smooth outside of $V$, there is a large constant $C$ such that $\rho-C<\psi$ in a neighborhood of $\de U$. We can then set
\begin{equation}\label{glue}
   \ti{\vp} = \left\{
     \begin{array}{ll}
     \max(\rho-C,\psi), &  \text{ on }U\\
     \psi &  \text{ on } X\backslash U,
     \end{array}
   \right.
\end{equation}
and we have that $\ti{\vp}$ is now globally defined, satisfies that $\ti{T}=\alpha+\ddbar\ti{\vp}$ is a K\"ahler current on $X$, and $\ti{\vp}|_V$ equals $\rho|_V$ which is smooth. This completes the proof when $n=2$.

It should be now clear what the difficulties are in extending this argument when $n>2$. The K\"ahler current $T$ that we have produced on $V$ will in general have singularities, and it is not clear anymore how to produce an extension to a neighborhood $U$ of $V$ in $X$. Furthermore, even if we could produce this extension, it would still have singularities and so the simple gluing in \eqref{glue} would not work to produce a global K\"ahler current.

To deal with the extension problem, the first observation is that in fact it is enough to achieve an extension on some bimeromorphic model of $X$ (which roughly speaking corresponds to finding an extension to a ``pinched neighborhood'' of $V$ inside $X$, cf. the discussion in \cite{CT2}). This is because if $\mu:\ti{X}\to X$ is a sequence of smooth blowups, which is an isomorphism at the generic point of $V$, then if we can achieve our extension and gluing goal on $\ti{X}$, we can then simply push forward the resulting K\"ahler current from $\ti{X}$ to $X$.

The advantage of working on a blowup is that, by resolving the ideal sheaf $\mathcal{I}$ defining the singularities of the K\"ahler current $T$ on $V$, we obtain a modification $\mu:\ti{X}\to X$ as above, such that $\mu^*T$ has analytic singularities described by $E\cap \ti{V}$, where $\ti{V}$ is the proper transform of $V$ and $E$ is an effective $\mathbb{R}$-divisor on $\ti{X}$ whose support has simple normal crossings, and also has normal crossings with $\ti{V}$. Explicitly, this means that there is an open cover $\{W_j\}_{1\leq j\leq N}$ of $\ti{V}$ by coordinate charts for $\ti{X}$, such that on each $W_j$ there are coordinates $(z_1,\dots,z_n)$ such that
$\ti{V}\cap W_j=\{z_1=\cdots=z_{n-k}=0\}$, and with $\mathrm{Supp}(E)\cap W_j=\{z_{i_1}\cdots z_{i_p}=0\},$ where $n-k<i_1,\dots,i_p\leq n$, and on $\ti{V}\cap W_j$ we have
\begin{equation}\label{sing}
\mu^*\vp=c\log\left(\prod_k |z_{i_k}|^{2\alpha_{i_k}}\right)+h_j,
\end{equation}
where $c,\alpha_{i_k}\in\mathbb{R}_{>0},$ and $h_j$ is a continuous function on $W_j$. As in \eqref{kc}, after adding a small correction term to $\mu^*\vp$, which is singular only along $E\cap\ti{V}$, we obtain a function $\vp'$ with analytic singularities so that $\mu^*\alpha+\ddbar\vp'$ defines a K\"ahler current on $\ti{V}$. If on $W_j$ we write $z=(z_1,\dots,z_{n-k}), z'=(z_{n-k+1},\dots,z_n)$, then we can extend $\vp'$ to a function $\vp_j$ on $W_j$ by setting
$$\vp_j(z,z')=\vp'(0,z')+A|z|^2,$$
for $A>0$ large, so that $\mu^*\alpha+\ddbar\vp_j$ is a K\"ahler current on $W_j$ (with analytic singularities).

Next, we would like to patch together these functions $\vp_j$ to a function defined on $U:=\bigcup_j W_j$, which is an open neighborhood of $\ti{V}$. If the functions $\vp_j$ were continuous, this would follow immediately from a well-known gluing procedure of Richberg \cite{Ri}. The key point is that on each nontrivial overlap $W_j\cap W_k$ the difference $\vp_j-\vp_k$ is in fact continuous, because the analytic singularities of both $\vp_j$ and $\vp_k$ are of exactly the same type, along the divisor $E\cap W_j\cap W_k$, as in \eqref{sing}. This is all that is needed for the Richberg gluing argument to go through, and we thus obtain a function $\rho$ on $U$, with analytic singularities described by $E\cap U$, such that $\mu^*\alpha+\ddbar\rho$ is a K\"ahler current on $U$.

Now that we have achieved our desired extension, albeit on some blowup of $X$, we proceed to the gluing step. The pullback $\mu^*K=\mu^*\alpha+\ddbar(\mu^*\psi)$ can also be modified by adding a small correction term to it (which is singular only along $E$) so that we obtain a global K\"ahler current $\mu^*\alpha+\ddbar\psi'\geq\delta'\ti{\omega}$ on $\ti{X}$, with analytic singularities along $\mu^{-1}(E_{nK}(\alpha))\cup \mathrm{Exc}(\mu)\supset\ti{V}$. In order for $\psi'$ to glue to $\rho$, we first add a small correction term to $\rho$, which is singular along the closure of $\mu^{-1}(E_{nK}(\alpha))\backslash\ti{V}$, to obtain a K\"ahler current $\mu^*\alpha+\ddbar\rho'$ on $U$ with analytic singularities wherever $\psi'$ is singular, except along $\ti{V}$.

Since the class $[\mu^*\alpha]$ is nef, for every $\ve>0$ there is a smooth function $\vp_\ve$ on $\ti{X}$ such that $\mu^*\alpha+\ddbar\vp_\ve\geq -\ve\delta'\ti{\omega}$, so that
$$\mu^*\alpha+\ddbar\left(\ve\psi'+(1-\ve)\vp_\ve\right)\geq \ve^2\delta'\ti{\omega}>0,$$
is still a K\"ahler current, whose singularities have been attenuated. Choosing $\ve>0$ sufficiently small, it is not hard to show that there are a neighborhood $\ti{U}$ of $\ti{V}$ in $\ti{X}$, with $\ti{U}\subset U$, and a constant $C>0$ such that
$$\ve\psi'+(1-\ve)\vp_\ve>\rho'-C,$$
on a neighborhood of $\de\ti{U}$.  We can then set
\begin{equation}\label{glue2}
   \ti{\vp} = \left\{
     \begin{array}{ll}
     \max(\rho'-C,\ve\psi'+(1-\ve)\vp_\ve), &  \text{ on }\ti{U}\\
     \ve\psi'+(1-\ve)\vp_\ve &  \text{ on } \ti{X}\backslash \ti{U},
     \end{array}
   \right.
\end{equation}
and we have that $\ti{\vp}$ is now globally defined, satisfies that $\ti{T}=\mu^*\alpha+\ddbar\ti{\vp}$ is a K\"ahler current on $\ti{X}$, and $\ti{\vp}|_{\ti{V}}$ equals $\rho'|_{\ti{V}}-C$ which is smooth at the generic point of $\ti{V}$. This completes the proof of Theorem \ref{ct}.\\

We conclude this article by noting that a refinement of the extension and gluing techniques that we just presented allowed Collins and the author \cite{CT2} to prove the following extension theorem for K\"ahler currents:

\begin{thm}
Let $(X,\omega)$ be a compact K\"ahler manifold and $V\subset X$ a submanifold. If $T$ is a K\"ahler current on $V$ with analytic singularities in the class $[\omega|_V]$ then $T$ extends to a global K\"ahler current on $X$ in the class $[\omega]$.
\end{thm}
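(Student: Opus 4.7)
The plan is to adapt the extension-and-gluing strategy from the proof of Theorem \ref{ct} outlined in Section \ref{sectproof}. The crucial simplification here is that the ambient class $[\omega]$ is already K\"ahler, so we do not need to import a global K\"ahler current from the non-K\"ahler locus; instead I will manufacture one on a suitable bimeromorphic model of $X$ from the exceptional divisor of the modification, exactly as in \eqref{kc}.

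First, I would take a log resolution $\mu:\tilde X\to X$, obtained as a composition of blowups with smooth centers contained in $V$, adapted to the coherent ideal sheaf $\mathcal{I}\subset\mathcal{O}_V$ defining the analytic singularities of $T=\omega|_V+\ddbar\varphi$, arranged so that the proper transform $\tilde V$ of $V$ is smooth and $(\mu|_{\tilde V})^*\varphi$ acquires singularities of simple normal crossings type along $E\cap\tilde V$, where $E$ is an effective SNC $\mathbb{R}$-divisor on $\tilde X$ transverse to $\tilde V$. Absorbing the exceptional divisor of $\mu|_{\tilde V}$ via a small correction as in \eqref{kc} produces a K\"ahler current $\mu^*\omega|_{\tilde V}+\ddbar\varphi'$ on $\tilde V$ with analytic singularities precisely along $E\cap\tilde V$.

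Next, I would run the local extension and Richberg patching of Section \ref{sectproof}. Choose a cover $\{W_j\}$ of $\tilde V$ by coordinate charts of $\tilde X$ with $\tilde V\cap W_j=\{z_1=\cdots=z_{n-k}=0\}$ and $E$ monomial in these coordinates, as in \eqref{sing}. Setting
\[
\varphi_j(z,z'):=\varphi'(0,z')+A|z|^2,\qquad z=(z_1,\dots,z_{n-k}),\ z'=(z_{n-k+1},\dots,z_n),
\]
for $A>0$ sufficiently large makes $\mu^*\omega+\ddbar\varphi_j$ a K\"ahler current on $W_j$ with analytic singularities along $E\cap W_j$. Because on overlaps the singularities of $\varphi_j$ and $\varphi_k$ are of exactly the same monomial type along $E\cap W_j\cap W_k$, their differences are continuous, and Richberg's gluing procedure yields a function $\rho$ on an open neighborhood $U:=\bigcup_j W_j$ of $\tilde V$ with $\mu^*\omega+\ddbar\rho$ a K\"ahler current on $U$.

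For the global step I exploit that $\tilde X$ is K\"ahler and $\mu^*[\omega]$ is nef. By the construction in \eqref{kc} applied to $\mu$, there is a quasi-psh function $\psi'$ on $\tilde X$, smooth off $\mathrm{Exc}(\mu)$, with $\mu^*\omega+\ddbar\psi'$ a K\"ahler current on $\tilde X$. Adding a plurisubharmonic correction to $\rho$ to make it singular in the same way as $\psi'$ along $\mathrm{Exc}(\mu)\cap U\setminus\tilde V$ produces a $\rho'$ whose singularities off $\tilde V$ coincide with those of $\psi'$. Using nefness, for every $\varepsilon>0$ pick smooth $\varphi_\varepsilon$ with $\mu^*\omega+\ddbar\varphi_\varepsilon\geq-\varepsilon\tilde\omega$; then $\varepsilon\psi'+(1-\varepsilon)\varphi_\varepsilon$ is a K\"ahler current on $\tilde X$ whose singularities have been attenuated by the factor $\varepsilon$. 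Choosing $\varepsilon$ small and a smaller neighborhood $\tilde U$ of $\tilde V$ with $\overline{\tilde U}\subset U$, one arranges $\varepsilon\psi'+(1-\varepsilon)\varphi_\varepsilon>\rho'-C$ near $\partial\tilde U$ and $\rho'-C>\varepsilon\psi'+(1-\varepsilon)\varphi_\varepsilon$ on a neighborhood of $\tilde V$. The regularized maximum
\[
\tilde\varphi:=\begin{cases}\widetilde{\max}(\rho'-C,\,\varepsilon\psi'+(1-\varepsilon)\varphi_\varepsilon)&\text{on }\tilde U,\\[2pt]\varepsilon\psi'+(1-\varepsilon)\varphi_\varepsilon&\text{on }\tilde X\setminus\tilde U,\end{cases}
\]
then glues to a global K\"ahler current $\tilde T:=\mu^*\omega+\ddbar\tilde\varphi$ on $\tilde X$ which restricts to a current cohomologous to $\mu^*T$ on $\tilde V$. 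Pushing forward (pushforward of K\"ahler currents under a modification is K\"ahler, as recalled in Section \ref{sectbas}) finally produces the desired global K\"ahler current $\mu_*\tilde T=\omega+\ddbar(\mu_*\tilde\varphi)$ on $X$ in the class $[\omega]$ extending $T$.

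The main obstacle will be the alignment of singularities in the last step: a priori $\rho$ and $\psi'$ are singular along unrelated divisors ($E\cap U$ versus $\mathrm{Exc}(\mu)$) and $\tilde V$ itself need not lie in either, so to make the regularized-max gluing behave correctly one must both enlarge the singularities of $\rho$ off $\tilde V$ to match $\psi'$ and attenuate $\psi'$ via the nefness approximants $\varphi_\varepsilon$, so that the two terms can be compared on $\tilde U$ and on $\partial\tilde U$ simultaneously. The simultaneous SNC arrangement of $E$, $\mathrm{Exc}(\mu)$ and $\tilde V$ achieved in the first step is precisely what makes these manipulations possible.
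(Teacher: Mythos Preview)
The paper does not prove this theorem. It is stated without proof at the end of Section \ref{sectproof}, with a reference to \cite{CT2} and the remark that it follows from ``a refinement of the extension and gluing techniques that we just presented.'' So there is no in-paper argument to compare against; your sketch is an attempt to supply one along the lines the paper suggests.

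Your outline follows Section \ref{sectproof} faithfully, but there is a genuine gap in the final step. The theorem asserts that $T$ itself extends: there should exist a K\"ahler current $\hat T$ on $X$ in $[\omega]$ with $\hat T|_V = T$. Your construction, after all the corrections (passing from $\mu^*\vp$ to $\vp'$ via \eqref{kc}, adding further singularities to $\rho$ to obtain $\rho'$, taking a regularized max, pushing forward), only yields a K\"ahler current whose restriction to $V$ is \emph{cohomologous} to $T$---and you say so explicitly. That conclusion is vacuous: any closed current in $[\omega]$ restricts on $V$ to something cohomologous to $T$, since $T\in[\omega|_V]$. The leap from ``cohomologous to $\mu^*T$ on $\ti V$'' to ``extending $T$'' in your last sentence is unjustified.

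This is tied to a concrete problem in your gluing. In the proof of Theorem \ref{ct}, the global potential $\psi'$ is identically $-\infty$ along $\ti V$ (because $\ti V\subset\mu^{-1}(E_{nK}(\alpha))$), which forces the max in \eqref{glue2} to equal $\rho'-C$ near $\ti V$ and hence pins down the restriction. In your setup $\psi'$ is singular only along $\mathrm{Exc}(\mu)$, not along $\ti V$, so on the generic point of $\ti V$ both branches of the max are finite. Worse, your local extensions carry the term $A|z|^2$, so $\rho$ (hence $\rho'$ away from the added poles) is \emph{larger} on $\de\ti U$ than on $\ti V$; the two inequalities you claim---$\rho'-C$ dominating near $\ti V$ and being dominated near $\de\ti U$---pull in opposite directions and cannot be arranged by merely adjusting $C$ and $\ve$. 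This is precisely where the ``refinement'' of \cite{CT2} enters: one has to organise the local extensions and the comparison with the global potential so that the restriction to $\ti V$ is preserved exactly, not just up to cohomology, and your sketch does not supply that mechanism.
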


It is expected that this extension result should hold more generally when $V$ is an analytic subvariety and $T$ is just a closed positive current in the class $[\omega|_V]$ (in which case $T$ should extend to a global closed positive current). This was proved by Coman-Guedj-Zeriahi \cite{CGZ} when $X$ is projective and $[\omega]$ is a rational class, using rather different techniques.

\section{The problem of effectivity}\label{secteff}

In this section we briefly discuss the problem of effectivity in Nakamaye's theorem. This issue was already raised by Nakamaye in \cite[p.553]{Na2}, and more recently reiterated in \cite[p.105]{Na3}, in view of applications to Diophantine geometry. It was also mentioned explicitly in \cite{Den2} in connections with hyperbolicity problems.

Let $X$ be a projective manifold with an ample line bundle $A$ and a nef and big line bundle $L$. By Nakamaye's Theorem \ref{naka} there is an $\ve\in\mathbb{Q}_{>0}$ such that
\begin{equation}\label{eqq}
\mathbb{B}(L-\ve A)=\Null(L),
\end{equation}
where as before $\mathbb{B}$ denotes the stable base locus.
The supremum of all such $\ve$ will be denoted by $\mu(L,A)>0$. The problem of effectivity is to give explicit/effective lower bounds for $\mu(L,A)$, for example depending only on intersection numbers of $L$ and $A$.

We can easily translate this question into analytic language. If $X$ is now a compact K\"ahler manifold with a K\"ahler form $\omega$ and a big and nef $(1,1)$ class $[\alpha]$, we define
$\mu'([\alpha],[\omega])$ to be the supremum of all $\ve>0$ such that there exists a K\"ahler current $T$ on $X$ in the class $[\alpha]$ with analytic singularities, with $T\geq \ve\omega$ on $X$ and with $\mathrm{Sing}(T)=\Null(\alpha)$. Note that, as the notation suggests, this depends only on the class of $\omega$. Thanks to Theorem \ref{ct} together with Boucksom's result in \eqref{equal}, we know that $\mu'([\alpha],[\omega])>0$.

The following proposition is essentially the same as Theorem \ref{bplus}:
\begin{prop}
Let $X$ be a projective manifold with an ample line bundle $A$ and a nef and big line bundle $L$. Then we have
$$\mu(L,A)=\mu'(c_1(L),c_1(A)).$$
\end{prop}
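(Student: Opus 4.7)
The plan is to establish the two inequalities $\mu'(c_1(L),c_1(A)) \geq \mu(L,A)$ and $\mu(L,A) \geq \mu'(c_1(L),c_1(A))$ separately, following closely the two halves of the proof of Theorem \ref{bplus}. Throughout, fix a K\"ahler form $\omega$ in $c_1(A)$ and a smooth Hermitian metric $h$ on $L$ with curvature $R_h$ representing $c_1(L)$.

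For the inequality $\mu' \geq \mu$, I would take any rational $\varepsilon \in (0,\mu(L,A))$, so that $\mathbb{B}(L-\varepsilon A) = \mathrm{Null}(L)$. Choose $m\geq 1$ with $m\varepsilon \in \mathbb{Z}$ and large enough so that $\mathrm{Bs}(m(L-\varepsilon A)) = \mathbb{B}(L-\varepsilon A) = \mathrm{Null}(L)$, and pick a basis $s_1,\dots,s_N$ of $H^0(X,m(L-\varepsilon A))$, whose common zero locus is therefore precisely $\mathrm{Null}(L)$. With a smooth Hermitian metric $h'$ on $m(L-\varepsilon A)$, the current
$$S = R_{h'} + \frac{\sqrt{-1}}{2\pi}\partial\overline{\partial}\log \sum_j |s_j|_{h'}^2$$
is a closed positive current in $c_1(m(L-\varepsilon A))$ with analytic singularities along $\mathrm{Null}(L)$. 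Then
$$T := \tfrac{1}{m}S + \varepsilon\omega$$
is a K\"ahler current in $c_1(L)$ with $T \geq \varepsilon\omega$ and $\mathrm{Sing}(T) = \mathrm{Null}(L)$, so by definition $\mu'(c_1(L),c_1(A)) \geq \varepsilon$. Taking the supremum over such $\varepsilon$ gives $\mu' \geq \mu$.

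For the reverse inequality $\mu \geq \mu'$, take any $\varepsilon \in (0,\mu'(c_1(L),c_1(A)))$ with a K\"ahler current $T \in c_1(L)$ satisfying $T\geq \varepsilon\omega$, analytic singularities, and $\mathrm{Sing}(T) = \mathrm{Null}(L)$. For any rational $\varepsilon' \in (0,\varepsilon)$, the current $T - \varepsilon'\omega$ is a K\"ahler current in $c_1(L-\varepsilon' A)$ which is smooth exactly off $\mathrm{Null}(L)$. Since $\mathbb{B}(L-\varepsilon'A) \supseteq \mathbb{B}_+(L) = \mathrm{Null}(L)$ (the latter by Theorem \ref{naka}), it suffices to show $\mathbb{B}(L-\varepsilon'A) \subseteq \mathrm{Null}(L)$. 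For this, given any $x \notin \mathrm{Null}(L)$, I would run exactly the argument from the second half of the proof of Theorem \ref{bplus}: perturb $T-\varepsilon'\omega$ by $\delta\,\ddbar(\theta \log|z-x|^2)$ to create an isolated logarithmic pole of prescribed Lelong number at $x$ while remaining a K\"ahler current with analytic singularities smooth on a punctured neighborhood of $x$, subtract a smooth representative of $c_1(K_X)$, and for $m$ sufficiently large and divisible apply H\"ormander's $L^2$ estimates (via \cite[Corollary 3.3]{Dem92}) to the class $m(L-\varepsilon' A) - K_X$ to produce a section of $m(L - \varepsilon' A)$ which does not vanish at $x$. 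Hence $x \notin \mathbb{B}(L-\varepsilon' A)$, giving the required equality $\mathbb{B}(L-\varepsilon' A) = \mathrm{Null}(L)$, so $\mu(L,A) \geq \varepsilon'$, and letting $\varepsilon' \uparrow \varepsilon$ and then $\varepsilon \uparrow \mu'$ finishes the proof.

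The only substantive step is the $L^2$ construction of a non-vanishing section in the second direction, and even this is not a real obstacle: it is an essentially verbatim rerun of the argument already written out in the proof of Theorem \ref{bplus}, the only difference being that there the existence of \emph{some} K\"ahler current smooth near $x$ was extracted from $x\notin E_{nK}$, whereas here the desired K\"ahler current $T-\varepsilon'\omega \in c_1(L-\varepsilon' A)$ is supplied directly by the hypothesis on $\mu'$. The rationality of $\varepsilon'$ is used precisely to make $m(L-\varepsilon' A)$ an honest line bundle for $m$ divisible enough, which is why the supremum in the definition of $\mu(L,A)$ is taken over rationals.
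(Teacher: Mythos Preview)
Your proof is correct and follows essentially the same route as the paper's: both directions mirror the two halves of the proof of Theorem \ref{bplus}, constructing the K\"ahler current from a basis of sections in one direction and running the H\"ormander $L^2$ argument with an added logarithmic pole in the other. Your explicit passage to a rational $\varepsilon' < \varepsilon$ and your invocation of $\mathbb{B}_+(L)=\Null(L)\subseteq \mathbb{B}(L-\varepsilon' A)$ for the reverse inclusion are minor clarifications of steps the paper leaves implicit.
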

\begin{proof}
Let $0<\ve<\mu(L,A)$, and choose a Hermitian metric $h_A$ on $A$ whose curvature is a K\"ahler form $\omega\in c_1(A)$ and a smooth Hermitian metric $h$ on $L$, with curvature form $\alpha\in c_1(L)$. Choose $m\geq 1$ such that $m\ve\in\mathbb{N}$ and $\mathrm{Bs}(m(L-\ve A))=\mathbb{B}(L-\ve A)=\Null(L)$. If $\{s_1,\dots,s_N\}$ is a basis of $H^0(X,m(L-\ve A))$ then
$$T=\alpha+\frac{\sqrt{-1}}{2\pi m}\de\dbar\log\sum_i |s_i|^2_{h^m\otimes h_A^{-m\ve}}\geq\ve\omega,$$
is a K\"ahler current on $X$ in the class $c_1(L)$ with analytic singularities along $\mathbb{B}(L-\ve A)=\Null(L)$, and so
$\mu'(c_1(L),c_1(A))\geq \ve$, and since $\ve<\mu(L,A)$ is arbitrary we obtain $\mu(L,A)\leq\mu'(c_1(L),c_1(A))$.

On the other hand, given $0<\ve<\mu'(c_1(L),c_1(A))$, we can find a K\"ahler current $T$ on $X$ in the class $c_1(L)$ which has analytic singularities along $\Null(L)$, and with $T\geq \ve\omega$. Given any $x\in X\backslash \Null(L)$, choose a coordinate patch $U$ containing $x$ so that $T$ is smooth on $U$, and let $\theta$ be a smooth cutoff function supported in $U$ and identically $1$ near $x$. Then
$$\ti{T}=T-\ve\omega+\ve'\ddbar(\theta\log|z-x|^2),$$
is a K\"ahler current for $\ve'>0$ sufficiently small, which is smooth on $U\backslash\{x\}$ and with Lelong number $\ve'$ at $x$. Following the exact same argument as in the proof of Theorem \ref{bplus}, using H\"ormander's $L^2$ estimates for $\overline{\partial}$, we obtain a global section of $m(L-\ve A),$ for $m\geq 1$ sufficiently large, which is nonvanishing at $x$. We conclude that $x\not\in\mathbb{B}(L-\ve A)$, and so we have shown that \eqref{eqq} holds. Hence
$\mu(L,A)\geq \ve$, and since $\ve<\mu'(c_1(L),c_1(A))$ is arbitrary we obtain $\mu(L,A)\geq\mu'(c_1(L),c_1(A))$.
\end{proof}

Now that we have recast the effectivity problem in Nakamaye's Theorem in analytic terms, it is easy to appreciate its difficulty. Indeed, on the one hand Boucksom's argument in \eqref{equal} shows that there is a K\"ahler current $T$ in the class $[\alpha]$ with analytic singularities along $\Null(\alpha)$ but does not quantify its positivity. On the other hand, the Demailly-P\u{a}un mass concentration technique \cite{DP} produces another K\"ahler current $\ti{T}$ in the class $[\alpha]$ with analytic singularities and with
$\ti{T}\geq\ve\omega,$ for any given
$$\ve<\frac{\int_X\alpha^n}{n\int_X\alpha^{n-1}\wedge\omega},$$
see \cite[Theorem 2.3]{To} for this result. This is a very attractive numerical bound, but unfortunately there is no control over the singularities of $\ti{T}$, which in general will be much larger than $\Null(\alpha)$. The tension between these two competing requirements, large positivity and small singularities, makes it extremely hard to give nontrivial lower bounds for $\mu(L,A)$ in general.

\end{document}